\documentclass[final,leqno]{siamltex}
\usepackage{mathbbol}
\usepackage{mathrsfs}
\usepackage{amsfonts}
\usepackage{stmaryrd}
\usepackage{amsmath}
\usepackage{amssymb}
\usepackage{multirow}
\usepackage{caption}
\usepackage{tabularx}
\usepackage{booktabs}
\usepackage{cite}
\usepackage{verbatim}
\usepackage{color}
\usepackage{graphicx}
\usepackage{subfigure}
\usepackage{hyperref}
\usepackage[all,cmtip]{xy}
\usepackage{epsfig,epstopdf,color,bm}
\allowdisplaybreaks[4]

\usepackage{lipsum}
\usepackage{algorithmic}
\usepackage{cases}
\usepackage{bm}
\usepackage{amsopn}
\usepackage{tabularx}
\usepackage{booktabs}
\usepackage{cite}
\usepackage{verbatim}
\usepackage{color}
\usepackage{graphicx}
\usepackage{subfigure}


\newcommand{\p}{\partial}

\newcommand{\be}{\begin{equation}}
\newcommand{\ee}{\end{equation}}
\newcommand{\ba}{\begin{array}}
\newcommand{\ea}{\end{array}}
\newcommand{\bea}{\begin{eqnarray}}
\newcommand{\eea}{\end{eqnarray}}
\newcommand{\beas}{\begin{eqnarray*}}
\newcommand{\eeas}{\end{eqnarray*}}

\newtheorem{remark}{Remark}[section]
\newtheorem{example}{Example}[section]
\newtheorem{algorithm}{Algorithm}[section]

\def\R{{\mathbb R}}

\newcommand{\cE}{\mathcal E}

\renewcommand{\l}{\left}
\renewcommand{\r}{\right}

\renewcommand{\d}{\mathrm{d}}

\newcommand{\bn}{\mathbf{n}}

\newcommand\cV{{\mathcal V}}
\newcommand{\bX}{\mathbf{X}}


\title{Stable BDF time discretization of BGN-based parametric finite element methods for geometric flows}
\author{Wei Jiang\thanks{School of Mathematics and Statistics,
Wuhan University, Wuhan, 430072, China ({\tt jiangwei1007@whu.edu.cn}). This author's research was supported by the National Natural Science Foundation of China Nos. 12271414 and 11871384.}
    \and
Chunmei Su\thanks{Yau Mathematical Sciences Center, Tsinghua University, Beijing, 100084, China ({\tt sucm@tsinghua.edu.cn}).
This author's research was supported by National Key R\&D Program of China (2023YFA1008902) and the National Natural Science Foundation of China No. 12201342.}
\and
Ganghui Zhang\thanks{Yau Mathematical Sciences Center, Tsinghua University, Beijing, 100084, China({\tt gh-zhang19@mails.tsinghua.edu.cn}).}}

\begin{document}

\maketitle

\begin{abstract}
We propose a novel class of temporal high-order parametric finite element methods for solving a wide range of geometric flows of curves and surfaces. By incorporating the backward differentiation formulae (BDF) for time discretization into the BGN formulation, originally proposed by Barrett, Garcke, and N\"urnberg (J. Comput. Phys., 222 (2007), pp.~441--467), we successfully develop high-order BGN/BDF$k$ schemes. The proposed BGN/BDF$k$ schemes not only retain almost all the advantages of the classical first-order BGN scheme such as computational efficiency and good mesh quality, but also exhibit the desired $k$th-order temporal accuracy in terms of shape metrics, ranging from second-order to fourth-order accuracy. Furthermore, we validate the performance of our proposed BGN/BDF$k$ schemes through extensive numerical examples, demonstrating their high-order temporal accuracy for various types of geometric flows while maintaining good mesh quality throughout the evolution.
\end{abstract}

\begin{keywords}
  Parametric finite element method, geometric flow, BGN scheme,  backward differentiation formulae, high-order accuracy in time, good mesh quality.
\end{keywords}

\section{Introduction}

Geometric flows, which are also known as geometric PDEs, have been a subject of significant interest in the past several decades. These flows involve the evolution of a geometric shape from one form to another and have found applications in various fields, such as grain boundary motion~\cite{Mullins1956},
solid-state dewetting~\cite{Zhao-Jiang-Wang-Bao,Zhao-Jiang-Bao2021},  image processing~\cite{Cao,Sapiro}, biomembranes~\cite{Du05}, cellular automata~\cite{Ruuth99,Ruuth08}.
Numerical simulation has played a crucial role in this rapidly growing research area, aiding in the understanding of the underlying theory and guiding experimental investigations.

One of the most important numerical methods is the parametric finite element method, which was first proposed by Dziuk \cite{Dziuk1990} for  simulating  mean curvature flow of surfaces in three-dimensional space. Since then, this type of numerical schemes has been extensively employed for solving various types of geometric flows arising from science and engineering problems, including mean curvature flow \cite{Dziuk1994,KLL2019,KLL2020}, surface diffusion flow~\cite{Bao-Zhao,Jiang21}, Willmore flow \cite{Dziuk2008,Dziuk2002,KLL2021}, anisotropic flow~\cite{Dziuk1999,Bao-Jiang-Li}, and generalized mean curvature flow~\cite{BGN07B,Pei-Li,Tim2022}. For a more comprehensive overview of recent advances in the parametric finite element method, we recommend referring to the survey papers \cite{DDE2005,BGN20}.

One of the main difficulties in solving the geometric flows by parametric finite element methods is the mesh distortion problem. As time evolves, the nodes may cluster together and the mesh deteriorates, leading to unstable simulations and algorithm failures. To date, numerous efforts have been made to address this issue in the literature. One approach is the use of artificial mesh regularization methods to improve the distribution of mesh points during the evolution, as proposed by B\"ansch, Morin, and Nochetto \cite{Bansch-Morin-Nochetto}. Another approach is to include an additional tangential velocity functional in the equation to prevent numerical solutions from forming various instabilities, as studied by Mikula and \v{S}ev\v{c}ovi\v{c} \cite{Mikula-Sevcovic,Mikula-Sevcovic2004}. As demonstrated in \cite{Deckelnick-Dziuk,Elliott-Fritz}, DeTurck's trick, or more precisely, harmonic map meat flows can be utilized to maintain good mesh quality. In addition, a reparametrization technique based on discrete harmonic map \cite{Steinhilber2014} has been employed for remeshing the evolutionary polyhedron at each time step. Recently, Hu and Li \cite{Hu2022} have proposed a new evolving surface finite element method by introducing an artificial tangential velocity to improve the mesh quality for mean curvature flow and Willmore flow.

Instead of utilizing the mesh redistribution approach, the so-called BGN scheme, originally proposed by Barrett, Garcke, and N\"urnberg, is a parametric finite element method constructed based on a formulation which allows an intrinsic tangential velocity to ensure a good distribution of mesh points~\cite{BGN07A,BGN07B,BGN08C}.
Taking curve shortening flow (CSF) as an example, now we present the key idea of the BGN scheme and our approach to developing high-order BGN-based schemes. Let $\Gamma:=\Gamma(t)$ be a family of  simple closed curves embedded in the two-dimensional plane driven by CSF, i.e.,  the velocity is given by:
\begin{equation}\label{Geometric equation}
  \cV = -\kappa\mathbf{n}.
\end{equation}
Here $\kappa$ represents the curvature of the curve and we always assume that a circle has a positive curvature,  and $\mathbf{n}$ is the outward unit normal to $\Gamma$. Firstly, the curve $\Gamma(t)$ can be represented by a vector function $\mathbf{X}(\cdot,t):\mathbb{I}\rightarrow \R^2$, where $\mathbb{I}:=\mathbb{R}/\mathbb{Z}$ is the periodic interval $[0, 1]$. Then, the above equation~\eqref{Geometric equation} is rewritten
as the following BGN formulation~\cite{BGN07B}:
\begin{equation}
\begin{split}
	\p_t \mathbf{X}\cdot \mathbf{n} &=-\kappa,\\
  	\kappa \mathbf{n}&=-\p_{ss}\mathbf{X},
\end{split}
\label{eqn:weak}
\end{equation}
where $s:=s(t)$ represents the arc-length of $\Gamma(t)$. Compared to the original flow \eqref{Geometric equation}, a new variable $\kappa$ is introduced. This formulation is attractive since the normal velocity remains unchanged, preserving the shape of the evolving curve. On the other hand, the tangential velocity is not prescribed in \eqref{eqn:weak}, allowing for an intrinsic tangential movement.  Based on this formulation, a semi-discrete scheme was proposed \cite{BGN07B}
\begin{equation}\label{BGN1semi}
	\begin{split}
	\frac{\mathbf{X}^{m+1}-\mathbf{X}^{m}}{\tau}\cdot \mathbf{n}^m &=-\kappa^{m+1},\\
  	\kappa^{m+1} \mathbf{n}^m&=-\p_{s^ms^m}\mathbf{X}^{m+1},
	\end{split}
\end{equation}
where $\cdot^m$ represents the approximation solution at time $t_m:=m\tau$ with $\tau$ as the time step size, e.g., $\Gamma^{m}$ (which is parameterized by $\bX^{m}$) is an approximation of $\Gamma(t_m)$, and $\bn^{m}$, $s^m$ correspond to the  approximations of the unit outer normal vector and arc-length, respectively. By using the linear finite element method in space and taking the variational formulation over the polygon  $\Gamma^{m}$ on both sides of equations, Barrett, Garcke, and N\"urnberg derived the classical BGN scheme for CSF.
It was shown that the resulting fully discrete BGN scheme is well-posed and possesses several desirable properties, such as unconditional stability, energy dissipation, and asymptotic long-time mesh equal distribution~\cite{BGN07A,BGN07B,BGN20}.
However, it is worth noting that the fully discrete BGN scheme is limited to first-order accuracy in time, and developing a high-order BGN-based scheme remains a challenging task. Actually, naive high-order time discretizations based on the BGN formulation might cause mesh distortion problems \cite{Duan2023,Jiang23B}. For example, as pointed in \cite{Duan2023}, high-order time discretizations by the backward
differentiation formula, based on the BGN methods, become unstable probably (see Figures \ref{Fig:flower_shape_evo}-\ref{Fig:flower_geo_evo}).

In this paper, we present a series of temporal high-order schemes  using the backward differentiation formulae (BDF) based on the BGN formulation. The numerical instability problem pointed in \cite{Duan2023} can be avoided effectively by carefully selecting the prediction curve/surface to integrate on. Taking the BDF2 time discretization of \eqref{eqn:weak} as an example,
we consider the following semi-discrete in time scheme for CSF:
\begin{equation}	
\begin{split}
\frac{\frac{3}{2}\bX^{m+1}-2\mathbf{X}^{m}+\frac{1}{2}\mathbf{X}^{m-1}}{\tau} \widetilde{\mathbf{n}}^{m+1} &=-\kappa^{m+1},\\
  	\kappa^{m+1} \widetilde{\mathbf{n}}^{m+1}&=-\p_{\tilde{s}^{m+1}\tilde{s}^{m+1}}\mathbf{X}^{m+1}.
	\end{split}
\end{equation}
Here, we utilize a semi-implicit approach to avoid a fully implicit scheme. Thus a suitable explicit approximation $\widetilde{\Gamma}^{m+1}$ of $\Gamma(t_{m+1})$, on which $\widetilde{\bn}^{m+1}$, $\tilde{s}^{m+1}$ are then explicitly calculated, is required such that only linear algebraic equations need to be solved at each time step. This approach maintains high-order accuracy by carefully selecting the approximation of the integration curve $\widetilde{\Gamma}^{m+1}$ to adjust all numerical quantities at the same time level $t_{m+1}$. \emph{Indeed, we emphasize that the selection of $\widetilde{\Gamma}^{m+1}$ is key to the success of this scheme, especially in maintaining good mesh distribution and stability during the evolution. Instead of using the classical extrapolation formulae from the former parameterized functions $\mathbf{X}^{m}$ and $\mathbf{X}^{m-1}$, which might lead to mesh distortion problems \cite{Duan2023}, here we choose the prediction curve $\widetilde{\Gamma}^{m+1}$ (or equivalently $\widetilde{\bX}^{m+1}$) as the solution of the classical first-order BGN scheme \eqref{BGN1semi}. Extensive numerical experiments indicate that expected second-order accuracy in time can be achieved in terms of shape metrics, good mesh quality is maintained during the evolution and mesh distortion can be prevented effectively (see Section 3 and Remark \ref{Example of extrapolation}).} The same idea can be further extended to develop BGN/BDF$k$ schemes for $k=3,4, 5, 6$.

It is worth mentioning that various efforts have been made in the literature to develop high-order temporal schemes for solving geometric flows, each based on different approaches. For example, an implicit Crank-Nicolson-type scheme was designed for forced curve shortening flow by combining with mesh redistribution \cite{Balazovjech-Mikula} or an adaptive moving mesh technique  \cite{Mackenzie-Nolan-Rowlatt-Insall} to maintain a good mesh quality. Both schemes have been shown to converge quadratically in time but require solving a system
of nonlinear equations at each time step.
Based on evolving surface finite element methods and backward differentiation formulae, Kov\'acs, Li and Lubich proposed some high-order numerical schemes for solving mean curvature flow and Willmore flow~\cite{KLL2019,KLL2021,KLL2020}.
Due to the lack of the tangential velocity, these schemes may suffer from mesh clustering and distortion, leading to breakdowns of simulations in some cases. Very recently, by introducing an artificial tangential velocity determined by a harmonic map from a fixed reference surface to the unknown evolving surface, Duan and Li have proposed a new class of parametric finite element methods, including a second-order scheme, with good mesh quality for simulating various types of geometric flows~\cite{Duan2023}.

Among the aforementioned works, our proposed BGN-based high-order schemes inherit most of the advantages from the classical BGN scheme, including:
\begin{itemize}
	\item good mesh quality is maintained during the evolution and no numerical instability occurs;
	
	\item the methods can be easily implemented very efficiently, as only linear algebraic equations need to be solved at each time step;
	
	\item the methods can be extended straightforwardly to a wide range of geometric flows of curves or surfaces, such as area-preserving mean curvature flow, generalized mean curvature flow and Willmore flow;
\end{itemize}
and more importantly,
\begin{itemize}
\item the approach can be  extended to higher-order BDF$k$ schemes which converge at the $k$-th order in time in terms of shape metrics while keeping all the above superiorities.
    \end{itemize}
Compared to the method proposed by \cite{Duan2023} that also employs BDF methods and achieves favorable mesh quality, we  emphasize  that our approach is based on the classical BGN scheme, requiring only minor modifications. Moreover, our innovative strategy of iteratively selecting the prediction curve/polyhedron using low-order BDF$k$ methods effectively combines both the evolving nature of the problem and the desirable mesh properties. This strategy is promising for potential applications in other numerical methods for geometric flows.

\medskip

The rest of this  paper is organized as follows. In Section 2, we provide a brief overview of the classical first-order BGN scheme, using curve shortening flow (CSF) and mean curvature flow (MCF) as examples. In Section 3, we propose high-order BGN/BDF$k$ schemes for solving various types of geometric flows. To demonstrate the accuracy, efficiency, and applicability of our high-order algorithms, we present numerous numerical examples for simulating curve and surface evolution driven by different types of geometric flows in Section 4.
 Finally, we summarize our findings, draw some conclusions based on the results, and discuss potential future research directions in this field in Section 5.

\smallskip

\section{Review of classical BGN scheme}

In this section, we review the classical first-order BGN schemes for CSF and its three-dimensional analogue mean curvature flow (MCF), which were proposed by Barrett, Garcke and N\"urnberg~\cite{BGN07A,BGN07B,BGN08B,BGN20}. To begin with, we rewrite the CSF into the BGN formulation~\eqref{eqn:weak}.

 We introduce the following finite element approximation. Let $\mathbb{I}=[0,1]= \bigcup_{j=1}^N I_j$, $N\ge 3$, be a decomposition of $\mathbb{I}$ into intervals given by the nodes $\rho_j$, $I_j=[\rho_{j-1},\rho_j]$. Let $h=\max\limits_{1\le j\le N}
|\rho_j-\rho_{j-1}|$ be the maximal length of the grid. Define the linear finite element space as
\[
V^h:=\{u\in C(\mathbb{I}): u|_{I_j} \,\,\, \mathrm{is\,\,\,linear,\,\,\,} \forall j=1,2,\ldots,N;\quad u(\rho_0)=u(\rho_N) \}\subseteq H^1(\mathbb{I},\mathbb{R}).
\]
The mass lumped inner product $(\cdot,\cdot)_{\Gamma^h}^h$ over the polygonal $\Gamma^h$, which is an approximation of the inner product $(\cdot,\cdot)_{\Gamma^h}$ by using the composite trapezoidal rule, is defined as
\[
(u,v)_{\Gamma^h}^h:=\frac{1}{2}\sum_{j=1}^N|\bX^h(\rho_j)-\bX^h(\rho_{j-1})|\l[(u\cdot v)(\rho_j^-)+(u\cdot v)(\rho_{j-1}^+) \r],
\]
where $\bX^h$ is a parameterization of $\Gamma^h$, $\bX^h(\rho_j)$ is the vertex of the polygon $\Gamma^h$, and  $u, v$ are two scalar/vector piecewise continuous functions with possible jumps at the nodes $\{\rho_j\}_{j=1}^N$,
and $u(\rho_j^{\pm})=\lim\limits_{\rho\rightarrow \rho_j^{\pm}}u(\rho)$.

Subsequently, the semi-discrete scheme of the formulation \eqref{eqn:weak} is as follows: given initial polygon $\Gamma^h(0)$ with vertices lying on the initial curve $\Gamma(0)$ in a clockwise manner, parametrized by $\bX^h(\cdot,0)\in [V^h]^2$,
find $(\bX^h(\cdot,t),\kappa^h(\cdot,t))\in [V^h]^2\times V^h$ so that
\begin{equation}\label{CSF:Semi-discrete}
	\begin{cases}
			\l(\p_t\mathbf{X}^h\cdot  \mathbf{n}^h,\varphi^h \r)_{\Gamma^h}^h+\l(  \kappa^h,\varphi^h\r)^h_{\Gamma^h}=0,\quad \forall\ \varphi^h\in V^h,\\
			\l(\kappa^h,\mathbf{n}^h\cdot \bm{\omega}^h\r)^h_{\Gamma^h}-\l(\p_s \mathbf{X}^h,\p_s\bm{\omega}^h \r)_{\Gamma^h}=0,\quad \forall\ \bm{\omega}^h\in  [V^h]^2,
		\end{cases}
\end{equation}
where we always integrate over the current curve $\Gamma^h$ described by $\mathbf{X}^h$, the outward unit normal $\bn^h$ is a piecewise constant vector given by
\[\bn^h|_{I_j}=-\frac{\mathbf{h}_j^\perp}{|\mathbf{h}_j|}, \quad \mathbf{h}_j=\bX^h(\rho_j,t)-\bX^h(\rho_{j-1},t),\quad j=1,\ldots, N,\]
with  $\cdot^\perp$ denoting clockwise rotation by $\frac{\pi}{2}$, and
the partial derivative $\p_s$ is defined piecewisely over each side of the polygon
$\p_s f|_{I_j}=\frac{\p_\rho f}{|\p_\rho \mathbf{X}^h|}|_{I_j}=\frac{(\rho_j-\rho_{j-1})\p_\rho f|_{I_j}}{|\mathbf{h}_j|}$. It was shown that the scheme \eqref{CSF:Semi-discrete} will always equidistribute the vertices along $\Gamma^h$ for $t>0$ if they are not locally parallel (see Remark 2.4 in \cite{BGN07A}).

For a full discretization, we fix $\tau>0$ as a uniform time step size for simplicity, and let $\bX^m\in [V^h]^2$ and $\Gamma^m$ be the approximations of $\bX(\cdot,t_m)$ and $\Gamma(t_m)$, respectively, for $m=0,1,2,\ldots$, where $t_m:=m\tau$. We define $\mathbf{h}_j^m:=\bX^m(\rho_j)-\bX^m(\rho_{j-1})$ and assume $|\mathbf{h}_j^m|>0$ for $j=1,\ldots,N$, $\forall\ m>0$. The discrete unit normal vector $\bn^m$, the discrete inner product $(\cdot,\cdot)^h_{\Gamma^m}$ and the discrete operator $\p_s$ are defined similarly as in the semi-discrete case.
Barrett, Garcke and N\"urnberg used a formal first-order approximation \cite{BGN07A,BGN07B} to replace the velocity $\p_t \bX$, $\kappa$ and $\p_s\bX$ by
\begin{equation*}
\begin{split}
     \p_t \bX(\cdot, t_m)&= \frac{\mathbf{X}(\cdot,t_{m+1})-\mathbf{X}(\cdot, t_m)}{\tau}+\mathcal{O}(\tau), \\
	\kappa(\cdot,t_m)&=\kappa(\cdot,t_{m+1})+\mathcal{O}(\tau),  \\
	\p_s\bX(\cdot,t_m)&= \p_s \mathbf{X}(\cdot, t_{m+1})+\mathcal{O}(\tau),
\end{split}
\end{equation*}
and the fully discrete semi-implicit BGN scheme reads as:

(\textbf{BGN1, the classical first-order BGN scheme for CSF}): For $m\ge 0$, find $\mathbf{X}^{m+1}\in [V^h]^2$ and $\kappa^{m+1}\in V^h$ such that
\begin{equation}\label{CSF:BGN1}
		\begin{cases}
			\l(\frac{\mathbf{X}^{m+1}-\mathbf{X}^m}{\tau},\varphi^h \mathbf{n}^m \r)^h_{\Gamma^m}+\l(  \kappa^{m+1},\varphi^h \r)_{\Gamma^m}^h=0,\quad \forall\ \varphi^h\in V^h,\\	\l(\kappa^{m+1},\mathbf{n}^m\cdot \bm{\omega}^h\r)_{\Gamma^m}^h-\l(\p_s \mathbf{X}^{m+1},\p_s\bm{\omega}^h\r)_{\Gamma^m}=0,\quad \forall\ \bm{\omega}^h\in  [V^h]^2.
		\end{cases}
	\end{equation}
The well-posedness and energy stability were established under some mild conditions~\cite{BGN20}. In practice, numerous numerical results show that the classical  BGN scheme \eqref{CSF:BGN1} converges quadratically in space~\cite{BGN07B} and linearly in time~\cite{Jiang23B}.

The above idea has been successfully extended to the mean curvature
flow (MCF) in $\mathbb{R}^3$  \cite{BGN08B}. The governing equation of MCF is given by
\begin{equation}\label{MCF}
		\cV = -\mathcal{H}\, \bn,
	\end{equation}
	where $\mathcal{H}$ is the mean curvature of the hypersurface. Following the lines in \cite{BGN08B,BGN20},  \eqref{MCF} can be
reformulated as
\begin{equation}
\begin{split}
	\p_t \mathbf{X}\cdot \mathbf{n} &=-\mathcal{H},\\
  	\mathcal{H} \mathbf{n}&=-\Delta_\Gamma \mathrm{Id},
\end{split}
\end{equation}
where $\Delta_\Gamma$ is the surface Laplacian (i.e., Laplace-Beltrami operator) and $\mathrm{Id}$ is the identity map on $\Gamma$.  For the finite element approximation, suppose we have a  polyhedra surface $\Gamma^m$ approximating  the closed surface $\Gamma(t_m)$, which is a union of non-degenerate triangles with no hanging vertices
\[
 \Gamma^m:=\bigcup^J_{j=1} \overline{\sigma^m_j},
 \]
where $\{\sigma^m_j\}_{j=1}^J$ is a family of  mutually disjoint open triangles. Set $h=\max\limits_{1\le j\le J}\mathrm{diam}(\sigma_j^m)$.
Denote $W_m^h:=W^h(\Gamma^m)$ by the space of scalar continuous piecewise linear functions on $\Gamma^m$, i.e.,
\[
W_m^h=W^h(\Gamma^m):=\l\{u\in C(\Gamma^m, \mathbb{R}):\ u|_{\sigma^m_j}\ \text{is linear},\ \forall j=1,\ldots,J \r\}\subseteq H^1(\Gamma^m, \mathbb{R}).
\]
Similarly, for piecewise continuous scalar or vector functions $u, v\in L^2(\Gamma^m, \mathbb{R}^3)$ with possible jumps at the edges of  $\sigma^m_j$, the $L^2$ inner product $( \cdot, \cdot)_{\Gamma^m}$ over the polyhedral surface $\Gamma^m$
\[\l( u, v\r)_{\Gamma^m}=\int_{\Gamma^m} u\cdot v \, \mathrm{d} A,\]
can be approximated by the mass lumped inner product
\begin{align*}
\l(u,v\r)_{\Gamma^m}^h:=\frac{1}{3}\sum_{j=1}^J|\sigma^m_j|
\sum_{k=1}^3(u\cdot v)\l((\mathbf{q}_{j_k}^m)^{-} \r),
\end{align*}
where $\{\mathbf{q}_{j_1},\mathbf{q}_{j_2},\mathbf{q}_{j_3}\}$ are the vertices of the triangle $\sigma^m_j$, $|\sigma^m_j|$ is the area of $\sigma^m_j$, and
\[u\l((\mathbf{q}_{j_k}^m)^{-} \r)=\lim\limits_{\sigma^m_j\ni \mathbf{x}\rightarrow \mathbf{q}_{j_k}^m}u(\mathbf{x}).\]

Similarly, the classical first-order BGN scheme for MCF was proposed as~\cite{BGN08B}:

(\textbf{BGN1, the classical BGN, first-order scheme for MCF}): Given $\Gamma^0$ and the identity function $\bX^0\in W_0^h$ on $\Gamma^0$, for $m\ge 0$, find $\mathbf{X}^{m+1}\in [W_m^h]^3$ and $\mathcal{H}^{m+1}\in W_m^h$ such that
\begin{equation}\label{MCF:BGN1}
		\begin{cases}
			\l(\frac{\mathbf{X}^{m+1}-\mathbf{X}^m}{\tau},\varphi^h \mathbf{n}^m \r)^h_{\Gamma^m}+\l(  \mathcal{H}^{m+1},\varphi^h \r)_{\Gamma^m}^h=0,\quad \forall\ \varphi^h\in W_m^h,\\	\l(\mathcal{H}^{m+1},\mathbf{n}^m\cdot \bm{\omega}^h\r)_{\Gamma^m}^h-\l(\nabla_{\Gamma^m}\mathbf{X}^{m+1},\nabla_{\Gamma^m} \bm{\omega}^h\r)_{\Gamma^m}=0,\quad \forall\ \bm{\omega}^h\in  [W_m^h]^3,
		\end{cases}
	\end{equation}
where the outward unit normal vector $\mathbf{n}^m$ of $\Gamma^m$ is defined piecewisely over triangle $\{\sigma_{j}^m\}$ as
\[
\l.\mathbf{n}^m\r|_{\sigma_j^m}=\mathbf{n}_j^m:=\frac{(\mathbf{q}^m_{j_2}-\mathbf{q}^m_{j_1} )\times (\mathbf{q}^m_{j_3}-\mathbf{q}^m_{j_1}) }{|(\mathbf{q}^m_{j_2}-\mathbf{q}^m_{j_1} )\times (\mathbf{q}^m_{j_3}-\mathbf{q}^m_{j_1})|},
\]
and $\mathbf{X}^m(\cdot)$ is the identity function on $[W_m^h]^3$. The surface gradient $\nabla_{\Gamma}$ over polyhedra $\Gamma$ is defined for $f\in W^h(\Gamma)$ piecewisely on triangles $\{\sigma_j\}$ with vertices $\{\mathbf{q}_{j_1},\mathbf{q}_{j_2},\mathbf{q}_{j_3} \}$ as
\[
(\nabla_{\Gamma}f)|_{\sigma_j}:=f(\mathbf{q}_{j_1})\frac{(\mathbf{q}_{j_3}
-\mathbf{q}_{j_2})\times \mathbf{n}_j}{|\sigma_j|}+f(\mathbf{q}_{j_2})\frac{(\mathbf{q}_{j_1}-\mathbf{q}_{j_3})
\times \mathbf{n}_j}{|\sigma_j|}+f(\mathbf{q}_{j_3})\frac{(\mathbf{q}_{j_2}-
\mathbf{q}_{j_1})\times \mathbf{n}_j}{|\sigma_j|},
\]
The well-posedness and energy stability were also  established under some mild conditions \cite{BGN08B}, and quadratic convergence rate  in space was reported.

Throughout the paper, the first-order BGN scheme \eqref{CSF:BGN1} or \eqref{MCF:BGN1} is referred to as BGN1 scheme.

 \smallskip

\section{High-order in time, BGN-based algorithms}
In this section, we propose  high-order temporal  schemes based on the backward differentiation formulae (BDF). For simplicity of notation, here we only present the schemes for the CSF and similar schemes can be proposed for MCF in $\mathbb{R}^3$. Specifically, we approximate the term  $\p_t\bX$ based on the following Taylor expansions~\cite{Leveque}:
\begin{align*}
	\p_t \bX(\cdot, t_{m+1})=
	\begin{cases}
		&\frac{ \frac{3}{2}\bX(\cdot,t_{m+1})-2\bX(\cdot,t_{m})+\frac{1}{2}\bX(\cdot,t_{m-1})}{\tau}+\mathcal{O}(\tau^2),\\
		&\frac{ \frac{11}{6}\bX(\cdot,t_{m+1})-3\bX(\cdot,t_{m})+\frac{3}{2}\bX(\cdot,t_{m-1})-\frac{1}{3}\bX(\cdot,t_{m-2})}{\tau}+\mathcal{O}(\tau^3),\\
		&\frac{ \frac{25}{12}\bX(\cdot,t_{m+1})-4\bX(\cdot,t_{m})+3\bX(\cdot,t_{m-1})-\frac{4}{3}\bX(\cdot,t_{m-2})+\frac{1}{4}\bX(\cdot,t_{m-3})}{\tau}+\mathcal{O}(\tau^4).
	\end{cases}
\end{align*}
Thus the velocity is  approximated  with an error of $\mathcal{O}(\tau^k)$, $2\le k\le 4$ at the time level $t_{m+1}$.
By taking the mass lumped inner product over a suitable predictor  $\widetilde{\Gamma}^{m+1}$, which is an approximation of $\Gamma(t_{m+1})$, we obtain the following high-order schemes (denoted as BGN/BDF$k$ schemes) for $2\le k\le 4$:

\smallskip

(\textbf{BGN/BDF$k$, High-order schemes for CSF}):~ For $k=2,3,4$, $m\ge k-1$, find $\mathbf{X}^{m+1}\in [V^h]^2$ and $\kappa^{m+1}\in V^h$  such that
\begin{equation}\label{CSF:BDFk}
		\begin{cases}
	\l(\frac{a \mathbf{X}^{m+1}-\mathbf{\widehat{X}}^{m}}{\tau},\varphi^h \mathbf{\widetilde{n}}^{m+1} \r)^h_{\widetilde{\Gamma}^{m+1}}+\l(  \kappa^{m+1},\varphi^h \r)_{\widetilde{\Gamma}^{m+1}}^h=0,\quad \forall\ \varphi^h\in V^h,\\
	\vspace{-3mm}\\		\l(\kappa^{m+1},\mathbf{\widetilde{n}}^{m+1}\cdot \bm{\omega}^h\r)_{\widetilde{\Gamma}^{m+1}}^h-\l(\p_s \mathbf{X}^{m+1},\p_s\bm{\omega}^h\r)_{\widetilde{\Gamma}^{m+1}}=0,\quad \forall\ \bm{\omega}^h\in  [V^h]^2,
		\end{cases}
	\end{equation}
where $a$, $\mathbf{\widehat{X}}^{m}$ are defined as
\begin{align}
	&\label{CSF:BDF2} \text{BDF2}: a = \frac{3}{2},\ \ \qquad \mathbf{\widehat{X}}^{m}=2\mathbf{X}^{m}-\frac{1}{2}\mathbf{X}^{m-1};\\
	&\label{CSF:BDF3}\text{BDF3}: a = \frac{11}{6},\qquad \mathbf{\widehat{X}}^{m}=3\mathbf{X}^{m}-\frac{3}{2}\mathbf{X}^{m-1}+\frac{1}{3}\mathbf{X}^{m-2};\\
	&\label{CSF:BDF4}\text{BDF4}: a = \frac{25}{12},\qquad \mathbf{\widehat{X}}^{m}=4\mathbf{X}^{m}-3\mathbf{X}^{m-1}+\frac{4}{3}\mathbf{X}^{m-2}-\frac{1}{4}\mathbf{X}^{m-3},\end{align}
where  $\widetilde{\Gamma}^{m+1}$, described by $\widetilde{\bX}^{m+1}\in [V^h]^2$, is a suitable approximation of
$\Gamma(t_{m+1})$,  $\widetilde{\mathbf{n}}^{m+1}:=-\l(\frac{\p_\rho \widetilde{\bX}^{m+1}  }{|\p_\rho \widetilde{\bX}^{m+1} |}\r)^{\perp}$ is the normal vector, and the derivative $\p_s$ is defined with respect to the arc-length of $\widetilde{\Gamma}^{m+1}$.
	
\smallskip

Before introducing the specific choice of the  approximation $\widetilde{\Gamma}^{m+1}$,  we first establish  the fundamental properties of above BGN/BDF$k$ schemes \eqref{CSF:BDFk}. The first crucial  property is the well-posedness of \eqref{CSF:BDFk} under some mild conditions.
	
\begin{theorem}[Well-posedness]\label{Well-posedness}
 For $k=2,3,4$, $m\ge k-1$,  assume that the polygon $\widetilde{\Gamma}^{m+1}$ in the BGN/BDF$k$ schemes \eqref{CSF:BDFk} satisfies the following two conditions:

  \noindent(i) At least two vectors in $\{\widetilde{\mathbf{h}}^{m+1}_j\}_{j=1}^{N}$ are not parallel, i.e.,
  \[\mathrm{dim}\l( \mathrm{Span}\l\{\widetilde{\mathbf{h}}^{m+1}_j \r\}_{j=1}^{N}\r)=2;\]

	\noindent	(ii) No vertices degenerate  on $\widetilde{\Gamma}^{m+1}$, i.e., \[\min_{1\le j\le N}| \widetilde{\mathbf{h}}^{m+1}_j|>0.\]
Then the above BGN/BDF$k$ schemes \eqref{CSF:BDFk} are well-posed, i.e., there exists a unique solution $(\mathbf{X}^{m+1},\kappa^{m+1})\in [V^h]^2\times V^h$ of \eqref{CSF:BDFk}.
\end{theorem}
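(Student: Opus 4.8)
My plan is to treat \eqref{CSF:BDFk} as a square linear system on the finite-dimensional space $[V^h]^2\times V^h$: the number of scalar unknowns (the nodal values of $\bX^{m+1}$ and $\kappa^{m+1}$) equals the number of test equations, so existence and uniqueness are equivalent. Hence it suffices to show that the homogeneous problem---obtained by setting the data $\widehat{\bX}^{m}=0$---admits only $(\bX^{m+1},\kappa^{m+1})=(\mathbf 0,0)$. This mirrors the well-posedness analysis of the classical BGN scheme \eqref{CSF:BGN1}, the only new feature being the coefficient $a$ in front of $\bX^{m+1}$.

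The heart of the argument is a discrete energy identity. I would test the first homogeneous equation with $\varphi^h=\kappa^{m+1}$ and the second with $\bm{\omega}^h=\bX^{m+1}$. The mixed term $\big(\bX^{m+1},\kappa^{m+1}\widetilde{\mathbf n}^{m+1}\big)^h_{\widetilde{\Gamma}^{m+1}}=\big(\kappa^{m+1},\widetilde{\mathbf n}^{m+1}\cdot\bX^{m+1}\big)^h_{\widetilde{\Gamma}^{m+1}}$ is common to both equations and can be eliminated, leaving
\[ \frac{a}{\tau}\big(\partial_s\bX^{m+1},\partial_s\bX^{m+1}\big)_{\widetilde{\Gamma}^{m+1}}+\big(\kappa^{m+1},\kappa^{m+1}\big)^h_{\widetilde{\Gamma}^{m+1}}=0. \]
Here is precisely where the BDF$k$ coefficients \eqref{CSF:BDF2}--\eqref{CSF:BDF4} enter: since $a\in\{\tfrac{3}{2},\tfrac{11}{6},\tfrac{25}{12}\}$ is positive and $\tau>0$, both (nonnegative) terms must vanish. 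By condition (ii), $\min_j|\widetilde{\mathbf h}^{m+1}_j|>0$, the mass-lumped product is a genuine inner product on $V^h$, so $\kappa^{m+1}=0$; and $\partial_s\bX^{m+1}=0$ on the connected closed polygon $\widetilde{\Gamma}^{m+1}$ forces $\bX^{m+1}$ to be a constant vector $\bX_c\in\R^2$.

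It remains to show $\bX_c=\mathbf 0$, and I expect this to be the main obstacle. Substituting $\kappa^{m+1}=0$ and $\bX^{m+1}\equiv\bX_c$ back into the first homogeneous equation gives $\big(\bX_c,\varphi^h\widetilde{\mathbf n}^{m+1}\big)^h_{\widetilde{\Gamma}^{m+1}}=0$ for all $\varphi^h\in V^h$. Testing against the nodal hat functions and using condition (ii) reduces this to relations among the scalars $|\widetilde{\mathbf h}^{m+1}_j|\,(\bX_c\cdot\widetilde{\mathbf n}^{m+1}_j)$. The delicate point is that these relations are coupled across neighbouring facets rather than pointwise, so to conclude I would invoke condition (i)---that $\{\widetilde{\mathbf h}^{m+1}_j\}$ spans $\R^2$---together with the non-degeneracy (ii) (and, if needed, the closedness $\sum_j\widetilde{\mathbf h}^{m+1}_j=\mathbf 0$ of the polygon) to force $\bX_c\cdot\widetilde{\mathbf n}^{m+1}_j=0$ along two non-parallel edges, whence $\bX_c=\mathbf 0$. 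Establishing this final implication cleanly---i.e.\ verifying that conditions (i)--(ii) genuinely rule out a nontrivial constant in the kernel---is the crux of the proof; the preceding reductions are routine saddle-point manipulations.
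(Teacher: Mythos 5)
Your proposal follows essentially the same route as the paper's proof: reduce by linearity to the homogeneous system, test with $\varphi^h=\kappa$ and $\bm{\omega}^h=\bX$ to obtain the energy identity in which $a>0$ forces both nonnegative terms to vanish, whence (using condition (ii)) $\kappa=0$ and $\bX$ is a constant $\bX^c$. The final step you flag as the crux---using the hat-function relations together with conditions (i)--(ii) to rule out a nonzero constant $\bX^c$---is exactly the point where the paper simply invokes the standard argument of \cite[Theorem 2.1]{BGN07A}, so your outline coincides with the paper's proof in both structure and substance.
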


\begin{proof}
Thanks to the linearity of the scheme \eqref{CSF:BDFk}, it suffices to  prove that the following algebraic system for
$(\bX,\kappa)\in [V^h]^2\times V^h$  has only zero solution,
	\begin{equation*}
		\begin{cases}
			\l(\frac{a\mathbf{X}}{\tau},\varphi^h \widetilde{\mathbf{n}}^{m+1} \r)^h_{\widetilde{\Gamma}^{m+1}}+\l(\kappa ,\varphi^h \r)_{\widetilde{\Gamma}^{m+1}}^h=0,\quad \forall\ \varphi^h\in V^h,\\
			\vspace{-3mm}\\
   \l(\kappa,\widetilde{\mathbf{n}}^{m+1}\cdot \bm{\omega}^h\r)_{\widetilde{\Gamma}^{m+1}}^h-\l(\p_s \mathbf{X},\p_s\bm{\omega}^h\r)_{\widetilde{\Gamma}^{m+1}}=0,\quad \forall\ \bm{\omega}^h\in  [V^h]^2.
		\end{cases}
	\end{equation*}
	Taking $\varphi^h=\kappa$ and $\bm{\omega}^h=\bX$, noticing that $a>0$, we arrive at
	\begin{equation}
		\bX\equiv \bX^c,\qquad \kappa \equiv \kappa^c.
	\end{equation}
	Then the standard argument in \cite[Theorem 2.1]{BGN07A} yields $\bX^c=0$ and $\kappa^c=0$  by  the assumption on $\widetilde{\Gamma}^{m+1}$.
	\end{proof}

\smallskip

It remains to determine $\widetilde{\Gamma}^{m+1}$, or equivalently $\widetilde{\bX}^{m+1}$. Indeed, the formulation of this predictor plays a crucial role in the success of BGN/BDF$k$ schemes. A natural approach is to apply standard extrapolation formulae, such as
  \begin{numcases}{\widetilde{\bX}^{m+1}=}
2\bX^{m}-\bX^{m-1},\quad  & \text{BDF2}, \label{exbdf2}\\
3\bX^{m}-3\bX^{m-1}+\bX^{m-2},\quad  & \text{BDF3}, \label{exbdf3}\\
	4\bX^{m}-6\bX^{m-1}+4\bX^{m-2}-\bX^{m-3},\quad  & \text{BDF4}. \label{exbdf4}
\end{numcases}
Unfortunately, as discussed in \cite{Duan2023}, naively extrapolating the curves as functions may result in the instability of high-order schemes (see also Remark \ref{Example of extrapolation}). In this paper,
we utilize the solution of the lower-order BGN/BDF$k$ scheme to predict  the discrete polygon $\widetilde{\Gamma}^{m+1}$, or equivalently $\widetilde{\bX}^{m+1}$, in the BGN/BDF$k$ scheme. In particular, for $k=1$, BGN/BDF1 scheme represents the classical BGN1 scheme, which is used to predicate $\widetilde{\Gamma}^{m+1}$ in BGN/BDF2 scheme.

To start BGN/BDF$k$ scheme, it is necessary to prepare the initial data $\bX^0,\ldots, \bX^{k-1}$, which are supposed to be approximations of $\bX(\cdot, 0),\ldots, \bX(\cdot, t_{k-1})$ with error at the $k$-th order $\mathcal{O}(\tau^{k})$. This can be  accomplished  by utilizing BGN1 scheme \eqref{CSF:BGN1} with a finer time step. Specifically, to obtain an approximation of $\bX(\cdot, t_1)$ with error at $\mathcal{O}(\tau^k)$, where $k=2,3,4$, it is  sufficient to implement BGN1 scheme with a time step size $\widetilde{\tau}\sim \tau^{k-1}$  by $\tau/\widetilde{\tau}$ steps. Taking into account the truncation error of BGN1 scheme \eqref{CSF:BGN1}, the accumulated temporal error at $t_1=\tau$ is given by
	\[
	\widetilde{\tau}^2* \tau/\widetilde{\tau}\sim \tau^k,\quad k=2,3,4.
	\] 		
\smallskip

Now we are ready to present our BGN/BDF$k$ schemes. Throughout all the algorithms, we always use $\Gamma^m$ and $\widetilde{\Gamma}^m$ to represent the polygon described by the vector functions $\bX^m, \widetilde{\bX}^m\in [V^h]^2$, respectively.

\begin{algorithm}
	BGN/BDF2 algorithm
	\label{BGN/BDF2 algorithm}
\begin{algorithmic}[1]
 \REQUIRE An initial curve $\Gamma(0)$ approximated by a polygon $\Gamma^0$ with $N$ vertices, described by $\bX^0\in [V^h]^2$ , time step $\tau$ and terminate time $T$ satisfying $T/\tau\in\mathbb{N}$.
 \STATE Calculate $\bX^{1}$ by using BGN1 scheme \eqref{CSF:BGN1} with $\Gamma^0$ and $\tau$. Set $m=1$.
 \WHILE{$m < T/\tau$,}
 \STATE {\small
 \begin{itemize}
\item Compute  $\widetilde{\bX}^{m+1}$ by using BGN1 scheme \eqref{CSF:BGN1}
with $\Gamma^m$ and $\tau$.
 \item Compute $\bX^{m+1}$ by using the BGN/BDF2 scheme \eqref{CSF:BDFk}-\eqref{CSF:BDF2} with
 $\bX^{m-1}$, $\bX^{m}$, $\widetilde{\Gamma}^{m+1}$ and $\tau$.
\end{itemize}
}
 \STATE $m = m + 1$;
 \ENDWHILE
 \end{algorithmic}
\end{algorithm}

\smallskip

\begin{algorithm}
	BGN/BDF3 algorithm
	\label{BGN/BDF3 algorithm}
\begin{algorithmic}[1]
 \REQUIRE An initial curve $\Gamma(0)$ approximated by a  polygon $\Gamma^0$ (parametrized as $\bX^0\in[V^h]^2$) with $N$ vertices, terminate time $T$, time step $\tau$ and a finer time step $\widetilde{\tau}\sim \tau^2$.
 \STATE Calculate $\bX^{1}$ by using BGN1 scheme \eqref{CSF:BGN1} with $\Gamma^0$ and $\widetilde{\tau}$ for $\tau/\widetilde{\tau}$ steps.
 \STATE Calculate $\bX^{2}$ by using BGN/BDF2 algorithm (Algorithm \ref{BGN/BDF2 algorithm}) with  $\bX^{0}$, $\bX^{1}$ and
 $\tau$. Set $m=2$.
 \WHILE{$m < T/\tau$,}
 \STATE {\small
 \begin{itemize}
\item Compute $\widetilde{\bX}^{m+1}$ by using BGN/BDF2  algorithm (Algorithm \ref{BGN/BDF2 algorithm}) with
$\bX^{m-1}$, $\bX^{m}$ and $\tau$.
 \item Compute $\bX^{m+1}$ by using the BGN/BDF3 scheme \eqref{CSF:BDFk} and \eqref{CSF:BDF3} with
 $\bX^{m-2}$, $\bX^{m-1}$, $\bX^{m}$, $\widetilde{\Gamma}^{m+1}$  and $\tau$.
\end{itemize}
}
 \STATE $m = m + 1$;
 \ENDWHILE
 \end{algorithmic}
\end{algorithm}

\begin{algorithm}
	BGN/BDF4 algorithm
	\label{BGN/BDF4 algorithm}
\begin{algorithmic}[1]
 \REQUIRE An initial curve $\Gamma(0)$ approximated by a  polygon $\Gamma^0$ (parametrized as $\bX^0\in[V^h]^2$) with $N$ vertices, terminate time $T$, time step $\tau$ and a finer time step $\widetilde{\tau}\sim \tau^3$.
 \STATE Calculate $\bX^{1}$ by using BGN1 scheme \eqref{CSF:BGN1} with $\Gamma^0$ and $\widetilde{\tau}$ for $\tau/\widetilde{\tau}$ steps.
 \STATE Calculate $\bX^{2}$ by using BGN1 scheme \eqref{CSF:BGN1} with $\Gamma^1$ and $\widetilde{\tau}$ for another $\tau/\widetilde{\tau}$ steps.
 \STATE Calculate $\bX^{3}$ by using BGN/BDF3 algorithm (Algorithm \ref{BGN/BDF3 algorithm}) with  $\bX^{0}$, $\bX^{1}$,
 $\bX^{2}$ and  $\tau$. Set $m=3$.
 \WHILE{$m < T/\tau$,}
 \STATE {\small
 \begin{itemize}
\item Compute $\widetilde{\bX}^{m+1}$ by BGN/BDF3  algorithm (Algorithm \ref{BGN/BDF3 algorithm}) with
$\bX^{m-2}$, $\bX^{m-1}$, $\bX^{m}$ and $\tau$.
 \item Compute $\bX^{m+1}$ by the BGN/BDF4 scheme \eqref{CSF:BDFk} and \eqref{CSF:BDF4} with $\bX^{m-3}$, $\bX^{m-2}$, $\bX^{m-1}$, $\bX^{m}$, $\widetilde{\Gamma}^{m+1}$  and $\tau$.
\end{itemize}
}
 \STATE $m = m + 1$;
 \ENDWHILE
 \end{algorithmic}
\end{algorithm}

%
%
%

\medskip

\begin{remark}
	Similar approach can be utilized to construct  BGN/BDF5 and BGN/BDF6 algorithms, but for the sake of brevity, we omit the details here.
\end{remark}
\medskip

It is worth noting that the classical first-order BGN scheme exhibits favorable properties in terms of mesh distribution.
As observed in \cite{BGN07B,BGN08B}, the node points are moved tangentially, resulting in eventual equidistribution in practice. Furthermore, it has been demonstrated in previous studies \cite{BGN20,Zhao-Jiang-Bao2021} that the mesh will eventually become evenly distributed if the solution has an equilibrium state. In other words, the mesh becomes asymptotically equidistributed when the solution has a non-degenerate equilibrium, such as in the case of area-preserving curve shortening flow or surface diffusion.
Additionally, recent discoveries have highlighted the significant role of BGN1 scheme in improving mesh quality when implementing a second-order BGN-type Crank-Nicolson leap-frog scheme occasionally \cite{Jiang23B}.
It is worth emphasizing that BGN1 scheme is utilized at every step of the mentioned BDF$k$ algorithms, offering great promise in ensuring favorable mesh distribution(cf. Section 4) and preventing mesh distortion or numerically induced self-intersections.

\smallskip

\section{Numerical results}

In this section, we provide numerous numerical examples to demonstrate the high-order convergence and advantageous properties in terms of mesh distribution of our BGN/BDF$k$ algorithms for various types of geometric flows.

\subsection{Curve evolution}

 In this subsection, we mainly  focus on the  following types of geometric flows of curves in the plane:
\begin{itemize}
	\item Curve shortening flow (CSF), which is the $L^2$-gradient flow of the length functional $E(\Gamma)=\int_\Gamma \mathrm{d} s$;
	
	\item Area-preserving curve shortening flow (AP-CSF), which is the  $L^2$-gradient flow of length functional with the constraint of a fixed enclosed area. This is a nonlocal second-order geometric flow with the governing equation:
	\begin{equation}\label{AP-CSF}
		\cV = (-\kappa+\l<\kappa\r>)\, \bn,
	\end{equation}
	where $\l<\kappa \r>:=\int_{\Gamma(t)}\kappa\, \d s/\int_{\Gamma(t)}\, \d s$ is the average curvature;
	
	\item Generalized mean curvature flow (G-MCF), which is given by
	\begin{equation}\label{G-MCF}
		\cV = -\beta \kappa^{\alpha}\,\bn,
	\end{equation}
		where $\alpha$ and $\beta$ are two real numbers satisfying $\alpha\beta>0$. For $0<\alpha \neq 1$ and $\beta=1$, it is called as the powers of mean curvature flow; for $\alpha=-1$ and $\beta=-1$, it is called as the inverse mean curvature flow;
		
	\item Willmore flow (WF), which is the $L^2$-gradient flow of Willmore energy functional
$W(\Gamma)=\frac{1}{2}\int_{\Gamma}\kappa^2\ \d s$. This is a fourth-order geometric flow with the governing equation:
	\begin{equation}\label{WF}
		\cV = \l(\p_{ss}\kappa-\frac{1}{2}\kappa^3\r)\, \bn.
	\end{equation}
\end{itemize}
Based on the wide applicability of BGN-type methods \cite{BGN07A,BGN07B,BGN08A,BGN08B,BGN20}, we can easily extend our BGN/BDF$k$  algorithms to the aforementioned geometric flows with minor adjustments.

To test the convergence rate of our proposed BGN/BDF$k$ schemes, it is advisable to employ shape metrics for measuring the numerical errors of the BGN-type schemes, as they allow an intrinsic tangential movement in order to facilitate a more uniform distribution of mesh points along the evolving curve or surface (see \cite[Section 3]{Jiang23B}). In the following examples, we utilize the manifold distance to quantify the difference between two curves. Specifically, the manifold distance between two curves $\Gamma_1$ and $\Gamma_2$ is defined as~\cite{Zhao-Jiang-Bao2021,Jiang23B}:
 \begin{equation}\label{Manifold distance}
	\mathrm{M}\l(\Gamma_1,\Gamma_2\r)
	: = |\Omega_1 \Delta \Omega_2 | =|\Omega_1 |+|\Omega_2 |-2 |\Omega_1\cap \Omega_2 |,\quad \Omega_1\Delta \Omega_2=(\Omega_1\setminus\Omega_2)\cup (\Omega_2\setminus\Omega_1),
\end{equation}
where $\Omega_1$ and $\Omega_2$ represent the regions enclosed by the two curves $\Gamma_1$ and $\Gamma_2$, respectively, and
$|\Omega|$ denotes the area of $\Omega$. It has been proved that the manifold distance satisfies the properties of symmetry, positivity and triangle inequality (see Proposition 5.1 in \cite{Zhao-Jiang-Bao2021}). Thus it can be seen as a type of shape metric. Under some suitable assumptions, it is related to the $L^1$ norm of a distance function. Specifically, assume that $\Gamma_1$ is a $C^2$ curve and $\Gamma_{\delta}:=\{x\in \R^{2}|\, |\mathrm{d}(x,\Gamma_1)|<\delta\}$ is its tabular neighborhood \cite{DDE2005}, where $\mathrm{d}(x,\Gamma_1)$ is the signed distance function. If $\Gamma_2\subset \Gamma_{\delta}$ and the projection map from $\Gamma_2$ to $\Gamma_1$ is one-to-one, then the manifold distance is indeed the $L^1$ norm of the distance function: \[
\mathrm{M}(\Gamma_1,\Gamma_2)=\int_{\Gamma_1} \l|\,\mathrm{d}(\cdot,\Gamma_2)\,\r|\, \mathrm{d}s.
\]
Very recently, Bai and Li \cite{Bai2023} have given a proof of convergence for Dziuk's parametric finite element method with finite elements of degree $k\ge 3$ for mean curvature flow. Their convergence result holds in the $L^2$ norm, which measures $L^2$ norm of the distance between two curves or surfaces. We also note that since numerical solutions are here represented as polygonal curves/polyhedron surfaces, it is very easy
to calculate the area/volume of the symmetric difference region, i.e., the manifold distance.
In practice, the computation of manifold distance can be conveniently performed using the MATLAB packages  \texttt{polyshape} and \texttt{alphaShape}.

To test the temporal errors, we fix the number of nodes $N$ large enough so that the spatial error can be neglected, and we vary the time step. The numerical error and the corresponding convergence order at time $T$ under the manifold distance are then computed as follows:
 \begin{equation}
 \label{eqn:errordef1}
\cE_{\tau}(T)=\cE_{M}(T)= \mathrm{M}(\bX^{T/\tau}, \bX_{\mathrm{ref}}),
\quad \text{Order}=\log\Big(\frac{\cE_{\tau_1}(T)}{\cE_{\tau_2}
(T)} \Big)\Big/ \log\Big(\frac{\tau_1}{\tau_2}\Big),
\end{equation}
where $\bX^{T/\tau}$ represents the computed solutions using a time step of $\tau$  until time $T$, and $\bX_{\mathrm{ref}}$ represents  the reference solutions, either  given by the exact solution or approximated by some numerical solution with a sufficiently fine mesh size and tiny time step when the true solution is not available.

To test the stability properties of our proposed schemes, we investigate the evolution of the following three important quantities with respect to evolving curves: (1) the relative area loss $\Delta A(t)$, (2) the normalized perimeter $L(t)/L(0)$, and (3) the mesh distribution function
$\Psi(t)$, which are defined respectively as follows:
\[
\Delta A(t)|_{t=t_m}=\frac{A^m-A^0}{A^0},\quad \l.\frac{L(t)}{L(0)}\r|_{t=t_m}=\frac{L^m}{L^0},\quad \Psi(t)|_{t=t_m}=\Psi^m:=\frac{\max_j|\mathbf{h}^m_j| }{\min_j|\mathbf{h}^m_j|},
\]
where $A^m$ and $L^m$ represent the enclosed area and the perimeter of the polygon determined by $\bX^m$, respectively.

\smallskip

\begin{example}[Convergence rate of BGN/BDFk scheme for CSF]
We check the convergence rate of the  classical BGN1 scheme \eqref{CSF:BGN1} and BGN/BDF$k$ schemes \eqref{CSF:BDFk} for CSF. The initial curve is chosen as either a unit circle or an ellipse defined by $x^2/4+y^2=1$. The parameters are set as $N=10000$, $T=0.25$.
\end{example}

\smallskip

For the case of unit circle, it is well-known that the CSF admits the exact solution
\[
\bX_{\mathrm{true}}(\rho,t)= \sqrt{1-2t}(\cos(2\pi\rho),\sin(2\pi\rho)),\quad \rho\in \mathbb{I}, \quad t\in [0,0.5).
\]

\begin{figure}[htpb]
\centering
\includegraphics[width=5.1in,height=3.6in]{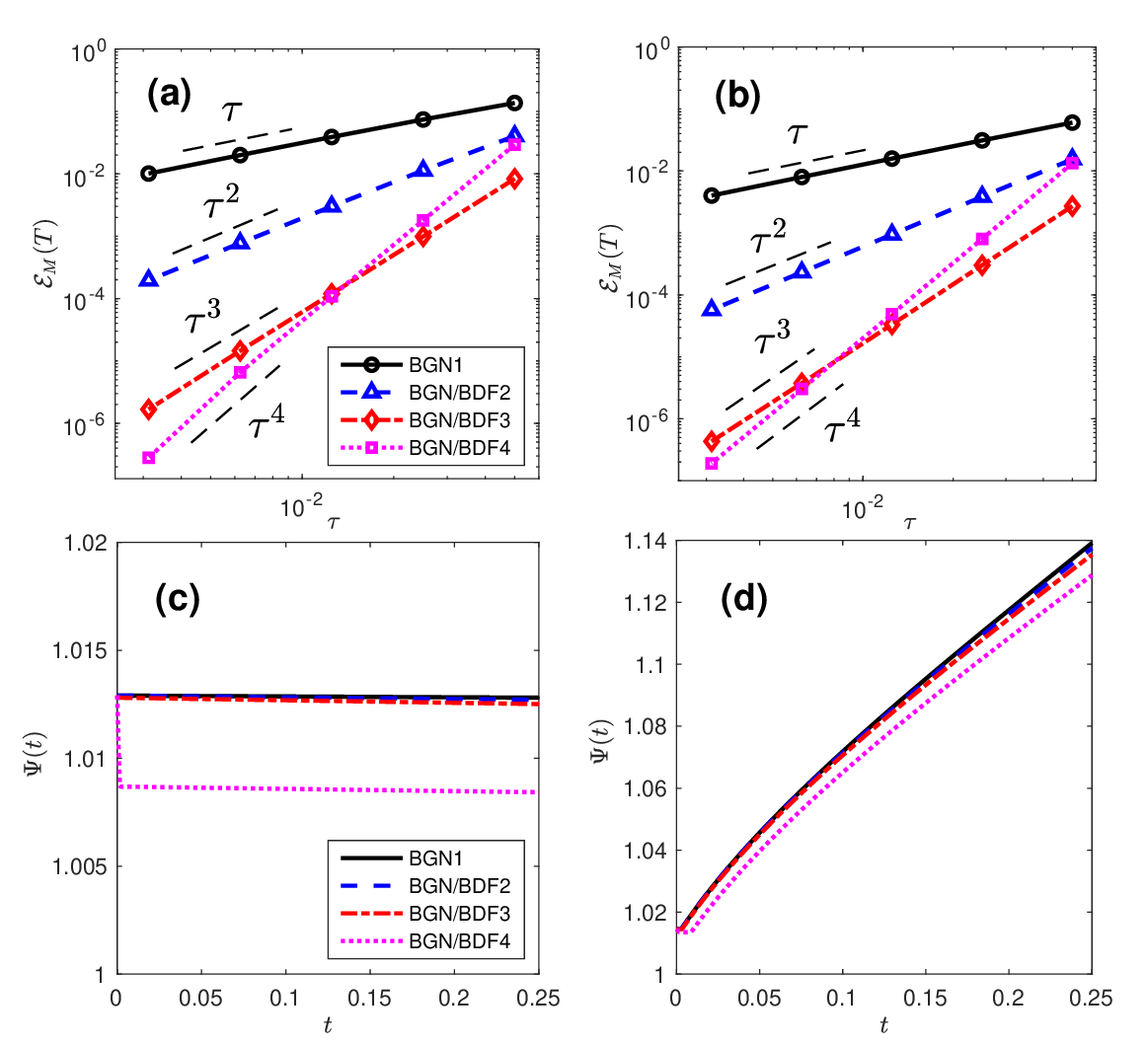}
\setlength{\abovecaptionskip}{2pt}
\caption{Log-log plot of the manifold distance errors via the classical BGN1 scheme and the BGN/BDF$k$ schemes ($2\le k\le 4$) at time $T=0.25$ for solving CSF with two various initial curves: (a) unit circle and (b) ellipse; and  the evolution of the mesh distribution function $\Psi(t)$: (c) unit circle and (d) ellipse, where $N=640$ and $\tau=1/1280$.}
\label{Fig:CSF}
\end{figure}

While for ellipse case, since the true solution is unavailable, we compute the reference solution using BGN/BDF4 algorithm with a fine mesh including $N=10000$ nodes and a tiny time step of $\tau=1/2560$. Figure \ref{Fig:CSF}(a)-(b) present a log-log plot of the numerical errors at time $T=0.25$ for the classical BGN1 scheme \eqref{CSF:BGN1} and BGN/BDF$k$ schemes  \eqref{CSF:BDFk}. It can be clearly observed that the classical BGN1 scheme has only first-order accuracy in time, while the BGN/BDF$k$ scheme achieves $k$th-order accuracy with $2\le k\le 4$ for both the unit circle and ellipse cases. Furthermore,  Figure \ref{Fig:CSF}(c) and (d) show the evolution of the mesh distribution function $\Psi(t)$ of BGN/BDF$k$ algorithms, from which we clearly see that the proposed BGN/BDF$k$ schemes share the same favorable properties with respect to the mesh distribution. Once the initial curve is approximated by a polygon with high mesh quality, the mesh will maintain its quality throughout the evolution. This characteristic significantly enhances the robustness of our BGN/BDF$k$ schemes.

\begin{figure}[htpb]
\centering
\includegraphics[width=5.1in,height=3.6in]{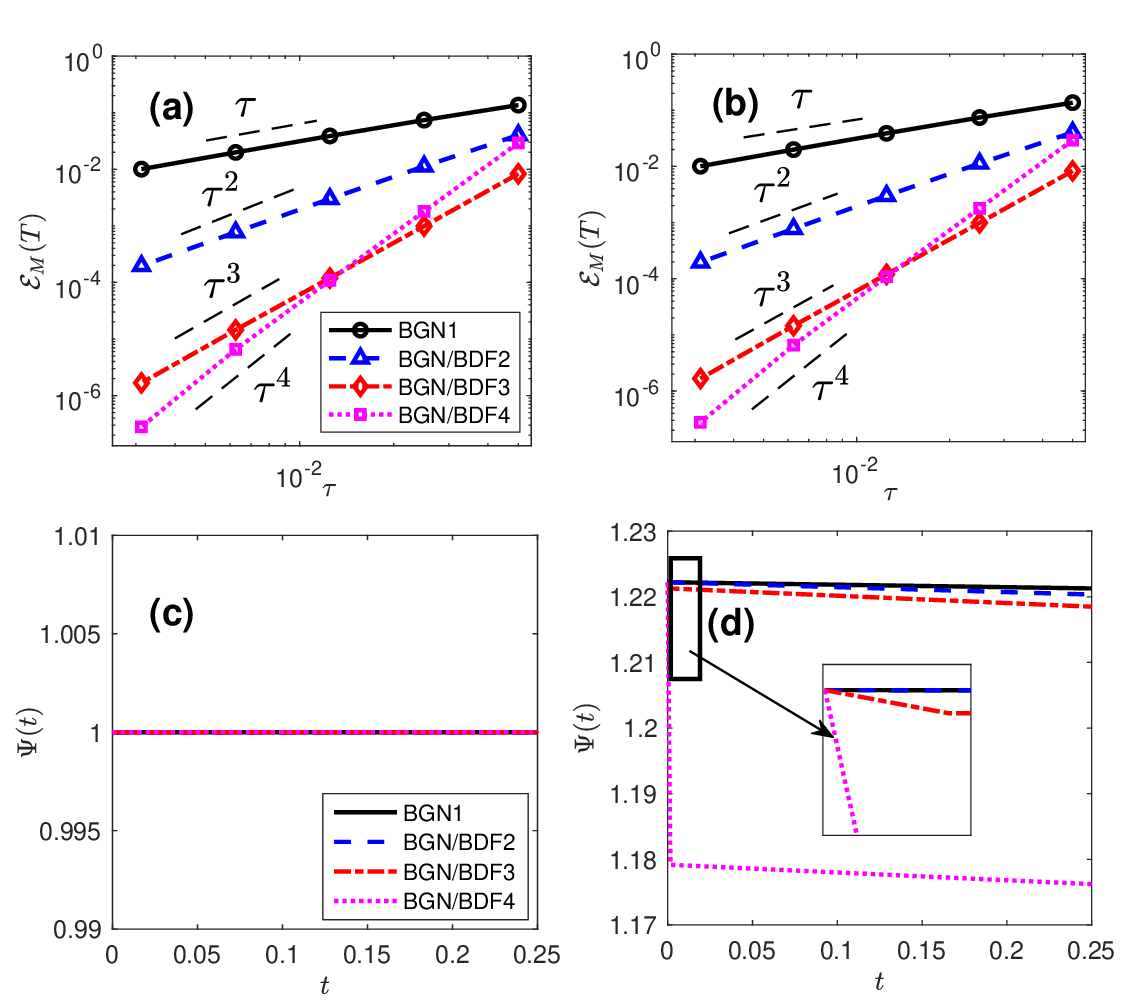}
\setlength{\abovecaptionskip}{2pt}
\caption{Log-log plot of the manifold distance errors at time $T=0.25$ for solving CSF with a unit circle being initial curve: (a) uniform initial  distribution; (b) nonuniform initial distribution; and  the evolution of the mesh distribution function $\Psi(t)$: (c) uniform initial  distribution and (d) nonuniform initial distribution, where $N=640$ and $\tau=1/1280$.}
\label{Fig:CSF_re}
\end{figure}

To further investigate the impact of the initial parametrization on the high-order accuracy and mesh quality of the BGN/BDF$k$ algorithms, we conduct additional experiments using the unit circle as the initial curve. We consider two choices for approximating the $N$-polygon:
\begin{itemize}
	\item Regular polygon: the nodes are set as
	\[
	\mathbf{X}_i=(\cos(2\pi i/N),\sin(2\pi i/N)),\quad i=1,\ldots,N.
	\]
	This ensures an initial mesh ratio of $1$ up to machine precision.
	\item Irregular polygon: the nodes are chosen as:
	\[
		\mathbf{X}_i=(\cos(2\pi i/N+0.1\,\sin(2\pi i/N)),\sin(2\pi i/N+0.1\,\sin(2\pi i/N)),\,\, i=1,\ldots,N.
	\]
This type of distribution can be traced back to \cite[Section 3.1]{BGN07B}. It is clear that the initial mesh ratio deviates from $1$.
\end{itemize}

Figure \ref{Fig:CSF_re}(a)-(b) demonstrate the robust temporal convergence of the BGN/BDF$k$ algorithms of $k$-th order for both initial distributions. Additionally, Figure \ref{Fig:CSF_re}(c) shows that when using a  uniform initial distribution, the mesh ratio remains close to $1$ up to machine precision. On the other hand, Figure \ref{Fig:CSF_re}(d) illustrates that the BGN/BDF$4$ algorithm significantly reduces the mesh ratio at the first time step. This is because we compute the polygon $\Gamma^1$ using the BGN1 scheme for $\tau/\widetilde{\tau}$ steps (cf. Algorithm \ref{BGN/BDF4 algorithm}), and the BGN1 scheme demonstrates favorable properties in terms of mesh distribution for curve evolution \cite{BGN07B,BGN08B,Jiang23B}.

\smallskip

\begin{example}[Extension to AP-CSF]
We extend the BGN/BDF$k$ schemes to AP-CSF and check the convergence rate of BGN1 scheme and BGN/BDF$k$ schemes. The initial curve is chosen as an ellipse. The parameters are chosen as $N=10000$, $T=0.25$ and $1$.
\end{example}

\smallskip

We briefly show how to construct BGN/BDF$k$ schemes for AP-CSF. We first write the  governing equation of AP-CSF into the following coupled equations:
\begin{equation}\label{AP-CSF:Coupled equation}
\begin{split}
	\p_t \mathbf{X}\cdot \mathbf{n} &=-\kappa+\l<\kappa\r>,\\
  	\kappa \mathbf{n}&=-\p_{ss}\mathbf{X}.
\end{split}
\end{equation}
The corresponding BGN1 scheme and BGN/BDF$k$ schemes adjust the first equation in \eqref{CSF:BGN1} and \eqref{CSF:BDFk} to their  nonlocal versions as
\begin{align*}
	\l(\frac{\mathbf{X}^{m+1}-\mathbf{X}^{m}}{\tau},\varphi^h \mathbf{n}^{m} \r)^h_{\Gamma^{m}} &=-\l( \kappa^{m+1}-\l<\kappa^{m+1}\r>_{\Gamma^m} ,\varphi^h \r)_{\Gamma^m}^h,\\
	\l(\frac{a \mathbf{X}^{m+1}-\mathbf{\widehat{X}}^{m}}{\tau},\varphi^h \mathbf{\widetilde{n}}^{m+1} \r)^h_{\widetilde{\Gamma}^{m+1}}&=-\l(  \kappa^{m+1}-\l<\kappa^{m+1}\r>_{\widetilde{\Gamma}^{m+1}},\varphi^h \r)_{\widetilde{\Gamma}^{m+1}}^h,
\end{align*}
respectively, where $\l<\kappa^{m+1}\r>_{\Gamma^{m}}:=\frac{(\kappa^{m+1},1)^h_{\Gamma^{m}}}{(1,1)^h_{\Gamma^{m}} }$, $\l<\kappa^{m+1}\r>_{\widetilde{\Gamma}^{m+1}}:=\frac{(\kappa^{m+1},1)^h_{\widetilde{\Gamma}^{m+1}}}{(1,1)^h_{\widetilde{\Gamma}^{m+1}} }$ and $a,\widehat{\bX}^m$ are defined by \eqref{CSF:BDF2}-\eqref{CSF:BDF4}.

For the test of temporal convergence rate, we take ellipse as an example and similarly compute the reference solution using a fine mesh size of $N=10000$ and a time step of $\tau=1/1280$. Figure \ref{Fig:AP-CSF}(a)-(b) show the comparison of temporal convergence rate of BGN1 scheme and BGN/BDF$k$ schemes at  different times $T=0.25,1$.  It can be clearly observed that the numerical error of  BGN/BDF$k$ schemes converge in $k$-th order, while the classical BGN1 scheme converges only linearly. The evolution of mesh distribution function is depicted in Figure  \ref{Fig:AP-CSF}(c) for different schemes with the same computational parameters $N=640$ and
$\tau=1/1280$. It is worth noting that for long-time simulation, the mesh distribution function $\Psi(t)$ approaches $1$, indicating the long-time asymptotic mesh equidistribution, for all
kinds of algorithms. Furthermore, we also observe that the high-order algorithms will achieve equidistributed mesh faster and the the mesh distribution function is decreasing with respect to the order $k$, showing the advantage of using high-order BGN/BDF$k$ schemes in achieving a more evenly distributed mesh.

\begin{figure}[htpb]
\centering
\includegraphics[width=5.1in,height=1.8in]{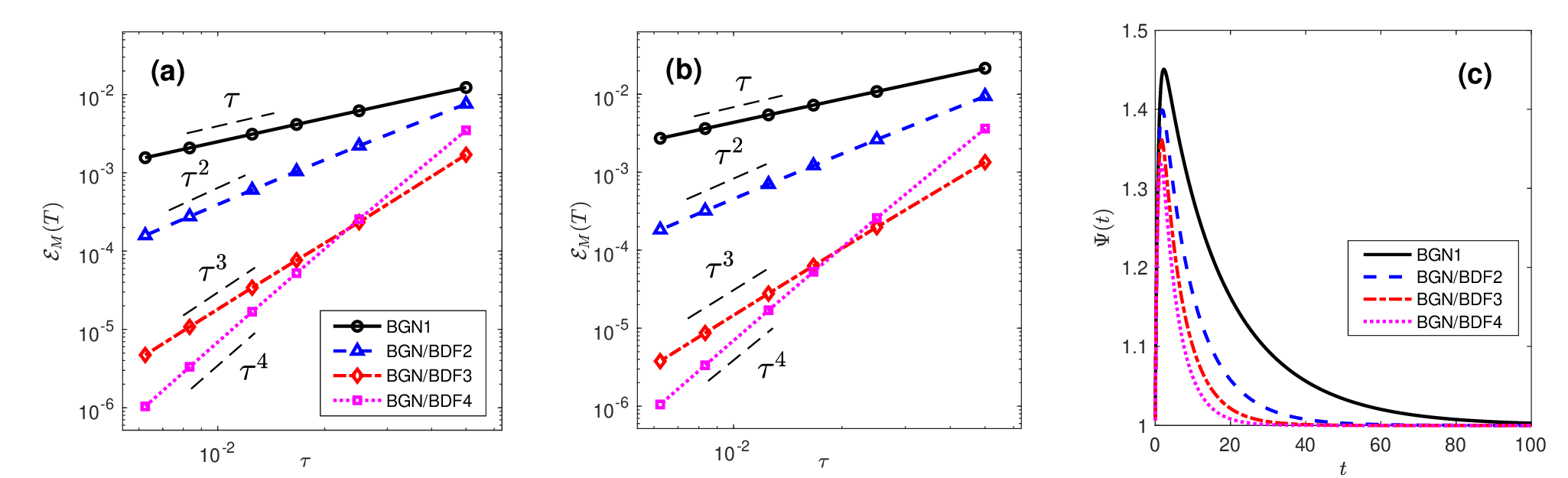}
\setlength{\abovecaptionskip}{2pt}
\caption{Log-log plot of the manifold distance errors for solving AP-CSF with ellipse being its initial shape at two different times: (a) $T=0.25$,  (b) $T=1$. (c) The corresponding evolution of the mesh distributional function $\Psi(t)$, where $N=640$ and $\tau=1/1280$.}
\label{Fig:AP-CSF}
\end{figure}

\smallskip

\begin{remark}\label{Example of extrapolation}
	 In Section 3, we highlight the limitations of standard extrapolation approximations for the prediction polygon $\widetilde{\Gamma}^{m+1}$. To illustrate this point, we use BGN/BDF3 algorithm to solve AP-CSF with a `flower' initial curve, i.e., a nonconvex curve  parametrized by
\begin{equation*}
\bX(\rho)=((2+\cos(12\pi\rho))\cos(2\pi\rho),(2+\cos(12\pi\rho))\sin(2\pi\rho)),\quad \rho\in \mathbb{I}=[0,1].
\end{equation*}
 Figure \ref{Fig:flower_shape_evo} (bottom row) demonstrates that when approximating  the integration polygon $\widetilde{\Gamma}^{m+1}$ by extrapolation formulae \eqref{exbdf3}, the evolution becomes unstable even at very early stage  $t=0.05$, eventually leading to a breakdown of the algorithm (see Figure \ref{Fig:flower_shape_evo}(d2)).

\begin{figure}[htpb]
\centering
\includegraphics[width=5.1in,height=2.5in]{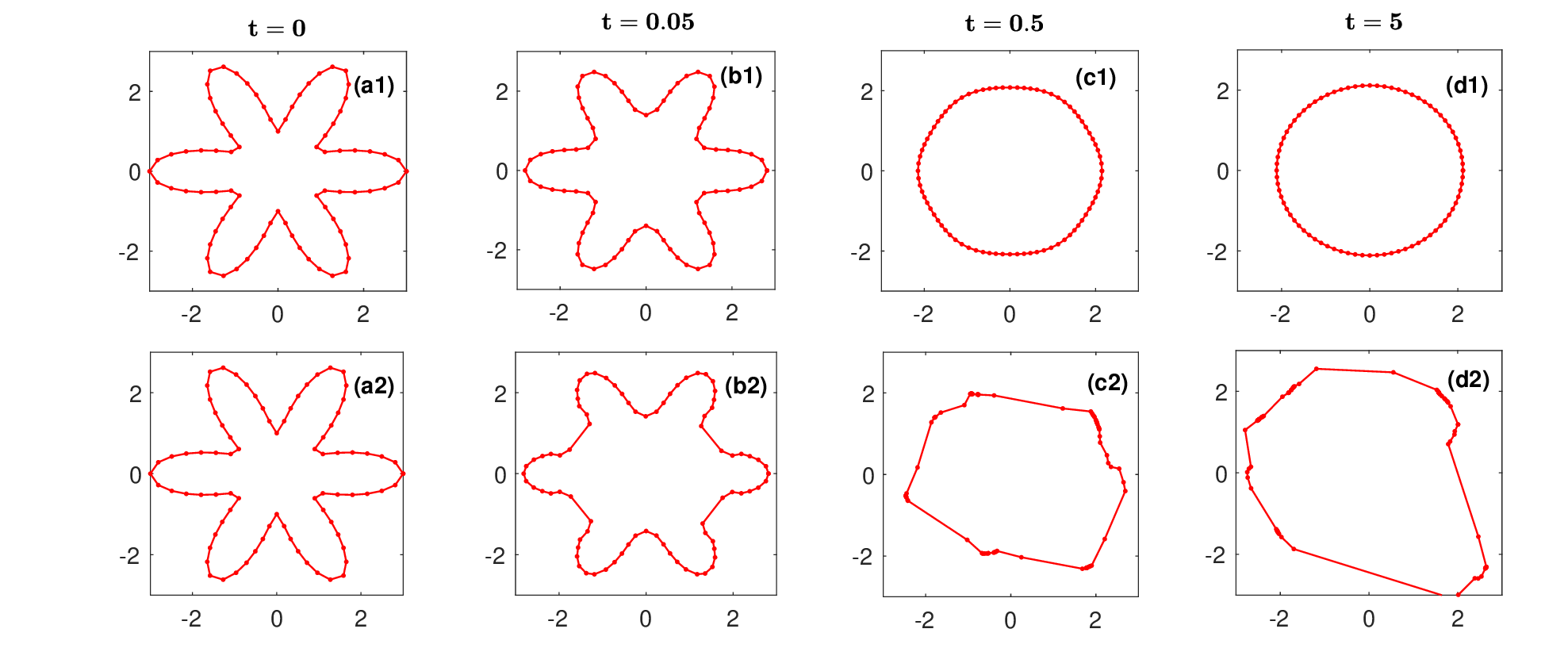}
\setlength{\abovecaptionskip}{2pt}
\caption{Evolution of AP-CSF for the  `flower' initial curve by using BGN/BDF3 algorithm with various choices of predictions  $\widetilde{\bX}^{m+1}$. Top row: Algorithm \ref{BGN/BDF3 algorithm}, i.e., approximating the prediction polygon by lower-order BGN/BDF2 scheme; Bottom row: approximating the prediction polygon by using extrapolation formulae \eqref{exbdf3}. The parameters are chosen as $N = 80$ and $\tau=1/160$.}
\label{Fig:flower_shape_evo}
\end{figure}

\begin{figure}[htpb]
\centering
\includegraphics[width=5.1in,height=2.8in]{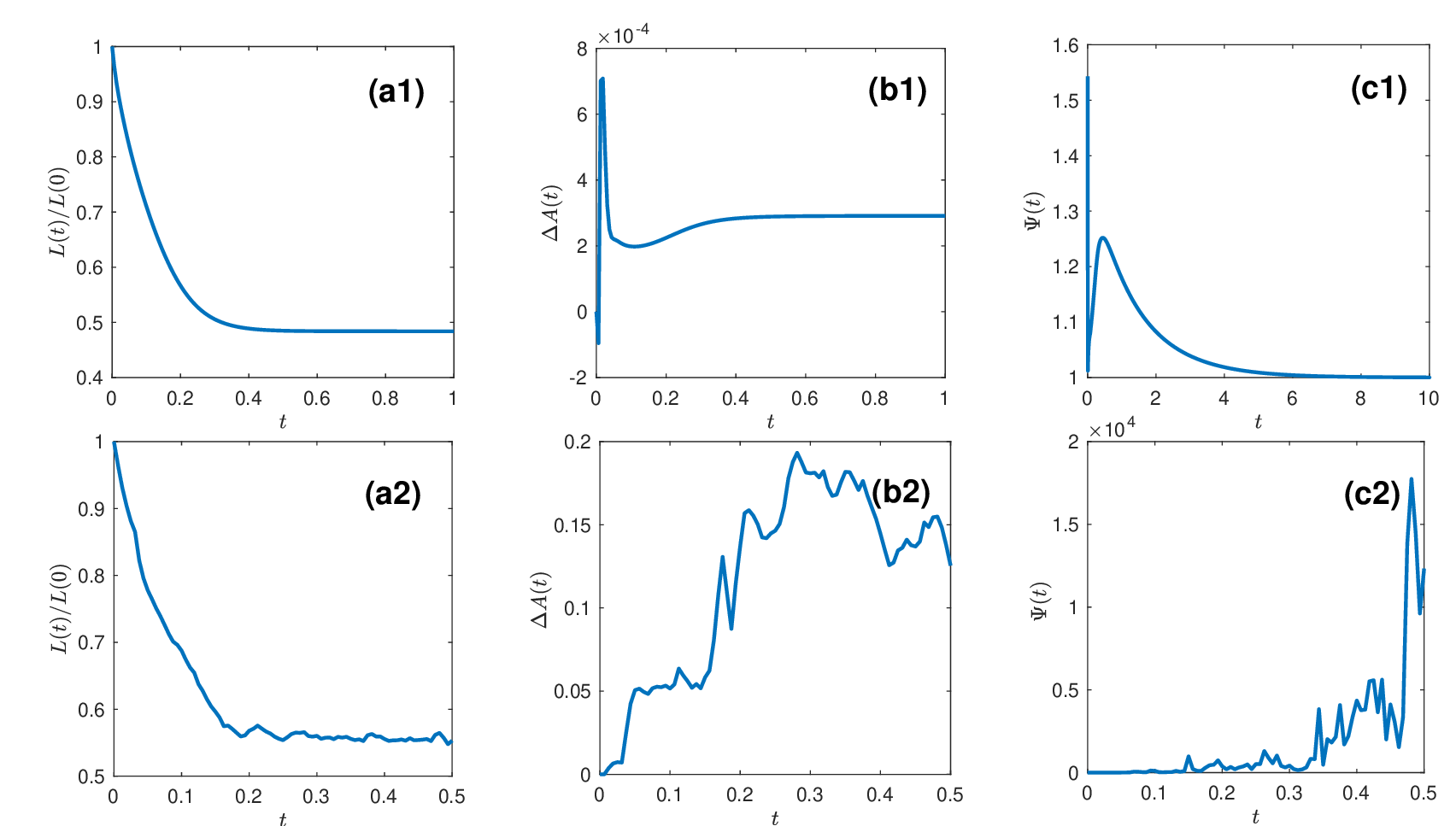}
\setlength{\abovecaptionskip}{2pt}
\caption{Evolution of geometric quantities of AP-CSF for `flower' initial shape by using BGN/BDF3 algorithm: (a1)-(a2) the normalized perimeter; (b1)-(b2) the normalized area loss; (c1)-(c2) the mesh distribution function $\Psi(t)$. Top row: the prediction polygon $\widetilde{\bX}^{m+1}$ is obtained by lower-order BGN/BDF2 scheme. Bottom row: $\widetilde{\bX}^{m+1}$  is obtained by the extrapolation formulae \eqref{exbdf3}. The parameters are chosen as $N = 80$ and  $\tau=1/160$.}
\label{Fig:flower_geo_evo}
\end{figure}

We further compare the evolution of geometric quantities in Figure \ref{Fig:flower_geo_evo}. Figure \ref{Fig:flower_geo_evo}(a1)-(b1)  demonstrate that our BGN/BDF3 scheme effectively preserves the perimeter-decreasing property of AP-CSF and the error of area is very small. Most importantly, Figure \ref{Fig:flower_geo_evo}(c1)  illustrates the long-time asymptotic mesh equidistribution property of our BGN/BDF3 algorithm. In comparison, when approximating the integration polygon by the extrapolation formulae, we observe from  Figure \ref{Fig:flower_geo_evo} (bottom row) that the mesh ratio becomes extremely large after some time, ultimately leading to the instability of BGN/BDF3 algorithm. This highlights the essential role of our treatment for approximations of the prediction polygon $\widetilde{\Gamma}^{m+1}$  through the classical BGN1 scheme or lower-order BGN/BDF$k$ schemes, which ensures the numerical stability and long-time mesh equidistribution.

\end{remark}

\smallskip

\begin{example}[Extension to G-MCF]
We extend the BGN/BDF$k$ schemes to G-MCF. For the convergence rate test, the initial shape is chosen as  a unit circle, the parameters are chosen as $N=5000$, $T=0.25$.

\end{example}

\smallskip

The construction of  BGN/BDF$k$ schemes for G-MCF is similar with the AP-CSF case. We first rewrite the coupled equations as
\begin{equation}\label{G-MCF:Coupled equation}
\begin{split}
	\p_t \mathbf{X}\cdot \mathbf{n} &=-\beta \kappa^\alpha,\\
  	\kappa \mathbf{n}&=-\p_{ss}\mathbf{X}.
\end{split}
\end{equation}
The corresponding BGN1 scheme and BGN/BDF$k$ schemes adjust the first equation in \eqref{CSF:BGN1} and \eqref{CSF:BDFk} to implicit terms as
\begin{align*}
	\l(\frac{\mathbf{X}^{m+1}-\mathbf{X}^{m}}{\tau},\varphi^h \mathbf{n}^{m} \r)^h_{\Gamma^{m}}+\l( \beta (\kappa^{m+1})^\alpha ,\varphi^h \r)_{\Gamma^m}^h &=0,\\
	\l(\frac{a \mathbf{X}^{m+1}-\mathbf{\widehat{X}}^{m}}{\tau},\varphi^h \mathbf{\widetilde{n}}^{m+1} \r)^h_{\widetilde{\Gamma}^{m+1}}+\l(  \beta (\kappa^{m+1})^\alpha,\varphi^h \r)_{\widetilde{\Gamma}^{m+1}}^h&=0,
\end{align*}
respectively, where $a,\widehat{\bX}^m$ are defined similarly. All of  above schemes can be efficiently solved using  Newton's iteration method \cite{BGN07B,Pei-Li}.

\begin{figure}[htpb]
\centering
\includegraphics[width=5.1in,height=2.8in]{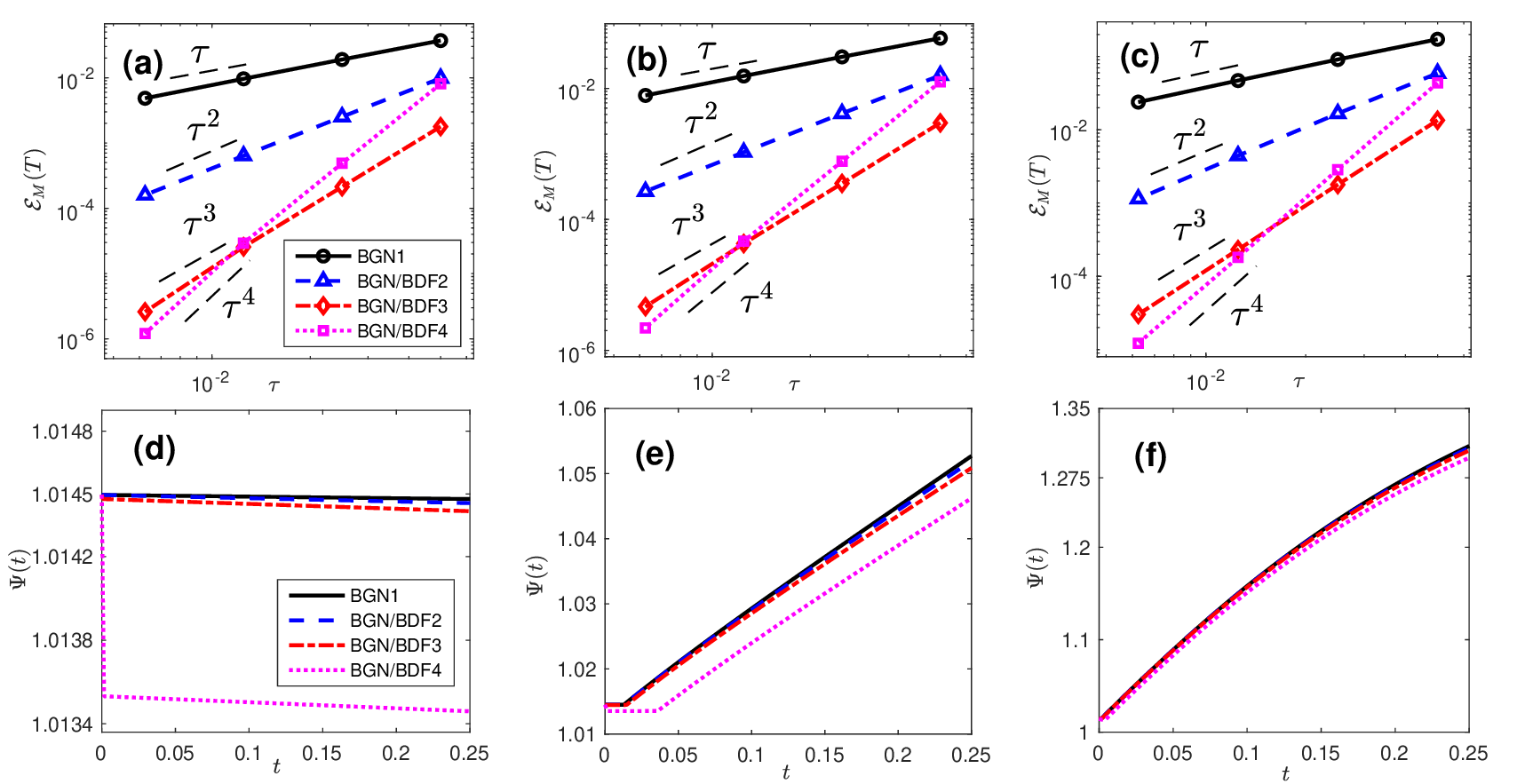}
\setlength{\abovecaptionskip}{2pt}
\caption{(Top row) Log-log plot of the manifold distance errors for solving G-MCF with a unit circle as its initial shape at time $T=0.25$ under different choices of $(\alpha,\beta)$: (a) $\beta=1$, $\alpha=1/3$, (b) $\beta=1$, $\alpha=1/2$,  (c) $\beta=-1$, $\alpha=-1$;  (Bottom row) the evolution of the corresponding mesh distributional function $\Psi(t)$ under different choices of $(\alpha,\beta)$: (d) $\beta=1$, $\alpha=1/3$, (e) $\beta=1$, $\alpha=1/2$,  (f) $\beta=-1$, $\alpha=-1$, where $N=640$ and $\tau=1/1280$.}
\label{Fig:G-MCF}
\end{figure}

In order  to test the temporal convergence rate for various choices of $\alpha$ and $\beta$, we use a unit circle as an initial data. The true solution is given by
\begin{align*}
	\bX_{\mathrm{true}}(\rho,t)= \begin{cases}
		(1-(\alpha+1)t)^{\frac{1}{\alpha+1}}(\cos(2\pi\rho),\sin(2\pi\rho)), \quad &\beta=1,\quad 0<\alpha\neq 1, \\
		e^t(\cos(2\pi\rho),\sin(2\pi\rho)),\quad &\beta=-1,\quad \alpha=-1,
	\end{cases}
\end{align*}
where  $\rho\in \mathbb{I}$ and $t\in [0,T]$. We use a sufficiently large number of  grid points $N=5000$, and we observe the expected convergence order for different cases, as depicted  in Figure \ref{Fig:G-MCF}(a)-(c).	Moreover, Figure \ref{Fig:G-MCF}(d)-(f) demonstrate that the BGN/BDF$k$ algorithms maintain the good mesh quality comparable to the classical BGN1 scheme, regardless of the specific settings of $(\alpha,\beta)$.



\smallskip

\begin{example}[Extension to WF]
We extend the BGN/BDF$k$ schemes to the fourth-order WF. The initial shape is chosen as a unit circle which yields the exact solution \cite{BGN08C}
\[
\bX_{\mathrm{true}}(\rho,t)= (1+2t)^{\frac{1}{4}}(\cos(2\pi\rho),\sin(2\pi\rho)),\quad \rho\in \mathbb{I},\quad t>0.
\]
\end{example}

\smallskip

Similar as the AP-CSF and G-MCF cases, we first reformulate the coupled equations as
\begin{equation}\label{WF:Coupled equation}
\begin{split}
	\p_t \mathbf{X}\cdot \mathbf{n} &=\p_{ss}\kappa-\frac{1}{2}\kappa^3,\\
  	\kappa \mathbf{n}&=-\p_{ss}\mathbf{X}.
\end{split}
\end{equation}
The corresponding BGN1 scheme \cite{BGN08C} and BGN/BDF$k$ schemes adjust the first equation in \eqref{CSF:BGN1} and \eqref{CSF:BDFk} to
\begin{align*}
	\l(\frac{\mathbf{X}^{m+1}-\mathbf{X}^{m}}{\tau},\varphi^h \mathbf{n}^{m} \r)^h_{\Gamma^{m}}+\l( \p_s\kappa^{m+1},\p_s\varphi^h \r)_{\Gamma^m} &=-\frac{1}{2}\l( (\kappa^{m+1})^3,\varphi^h \r)_{\Gamma^m}^h,\\
	\l(\frac{a \mathbf{X}^{m+1}-\mathbf{\widehat{X}}^{m}}{\tau},\varphi^h \mathbf{\widetilde{n}}^{m+1} \r)^h_{\widetilde{\Gamma}^{m+1}}+\l( \p_s\kappa^{m+1},\p_s\varphi^h \r)_{\widetilde{\Gamma}^{m+1}}&=-\frac{1}{2}\l((\kappa^{m+1})^3,\varphi^h \r)_{\widetilde{\Gamma}^{m+1}}^h,
\end{align*}
respectively.

Figure \ref{Fig:WF}(a)-(b) display the convergence and the evolution of the mesh ratio $\Psi(t)$, respectively, which suggests that the BGN/BDF$k$ scheme converges at the $k$-th order in terms of manifold distance and it maintains the good mesh quality throughout the simulation of the fourth-order WF.

\begin{figure}[htpb]
\centering
\includegraphics[width=5.1in,height=1.8in]{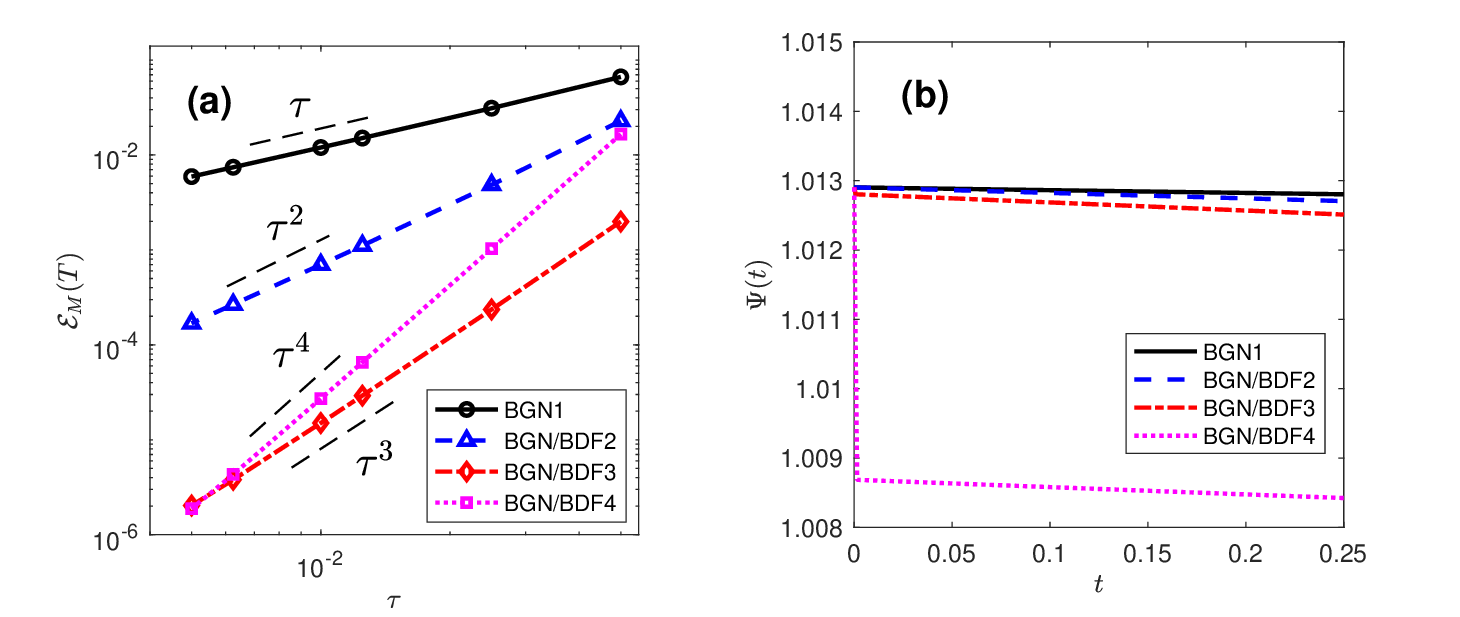}
\setlength{\abovecaptionskip}{-0pt}
\caption{(a) Log-log plot of the numerical  errors for solving the WF with unit circle  initial shape at time $T=0.25$. (b) The evolution of the mesh distribution function $\Psi(t)$, where the parameters are chosen as $N=640$ and $\tau=1/1280$.}
\label{Fig:WF}
\end{figure}

\smallskip

\subsection{Surface evolution}
In this subsection, we mainly consider the following geometric flows of surface evolution in three-dimensional space:

\begin{itemize}
	\item Mean curvature flow (MCF), which is a second-order geometric flow defined by
	\begin{equation}
		\cV = -\mathcal{H}\, \bn,
	\end{equation}
	where $\mathcal{H}$ is the mean curvature of hypersurface;
	\item Surface diffusion flow (SDF), which is a fourth-order geometric flow defined by
	\begin{equation}\label{SDF}
		\cV = \Delta_\Gamma \mathcal{H}\, \bn,
	\end{equation}
	where $\Delta_\Gamma$ is the surface Laplacian.
\end{itemize}

\smallskip

Given an initial surface $\Gamma(0)$, we use the  MATLAB package  \texttt{DistMesh} \cite{Persson} to give an initial triangulation polyhedron $\Gamma^0$ with good mesh quality. It is worth mentioning that the manifold distance \eqref{Manifold distance} can be easily extended to the three-dimensional case, and we still use this shape metric to measure the difference between two polyhedrons. We point out that other shape metrics such as Hausdorff distance \cite{Jiang23B,Bao-Zhao} can also effectively characterize the convergence rates of our schemes. The numerical error and convergence order are similarly defined as \eqref{eqn:errordef1}.

We further investigate the evolution of the following geometric quantities: (1) the relative volume loss $\Delta V(t)$, (2) the normalized surface area $S(t)/S(0)$, which are defined respectively as follows for $m\ge 0$:
\[
\Delta V(t)|_{t=t_m}=\frac{V^m-V^0}{V^0},\quad \l.\frac{S(t)}{S(0)}\r|_{t=t_m}=\frac{S^m}{S^0},
\]
where $V^m$ is the volume enclosed by the polyhedron  determined by $\bX^m$, and $S^m$ represents the surface area of the polyhedron. To evaluate the mesh quality of a polyhedron, we introduce two  mesh distribution functions $r_h(t)$ and $r_a(t)$ \cite{BGN08B,Hu2022} defined as
\begin{align*}
	&\l.r_h(t)\r|_{t=t_m}
	:=\frac{\max_{j}\max \{\|\mathbf{q}^m_{j_1}-\mathbf{q}^m_{j_2}\|,\|\mathbf{q}^m_{j_2}-\mathbf{q}^m_{j_3}\|,\|\mathbf{q}^m_{j_3}-\mathbf{q}^m_{j_1}\| \}}{\min_{j}\min \{\|\mathbf{q}^m_{j_1}-\mathbf{q}^m_{j_2}\|,\|\mathbf{q}^m_{j_2}-\mathbf{q}^m_{j_3}\|,\|\mathbf{q}^m_{j_3}-\mathbf{q}^m_{j_1}\|\}},\\
&\l.r_a(t)\r|_{t=t_m}
	:=\frac{\max_j|\sigma_j^m|}{\min_j|\sigma_j^m|}.
\end{align*}

\smallskip

\begin{example}[Extension to MCF]
The BGN/BDF$k$ schemes can be extended straightforwardly to the mean curvature flow in $\mathbb{R}^3$. It suffices to replace the curvature $\kappa^{m+1}$ in \eqref{CSF:BDFk} by the mean curvature $\mathcal{H}^{m+1}$.
\end{example}

For the test of convergence order, we choose the initial surface as a unit sphere,
which remains as a sphere with radius $R(t)$ given by
\[
R(t)=\sqrt{1-4t},\quad t\in \l[0,1/4\r).
\]

\begin{figure}[htpb]
\centering
\includegraphics[width=5.1in,height=1.5in]{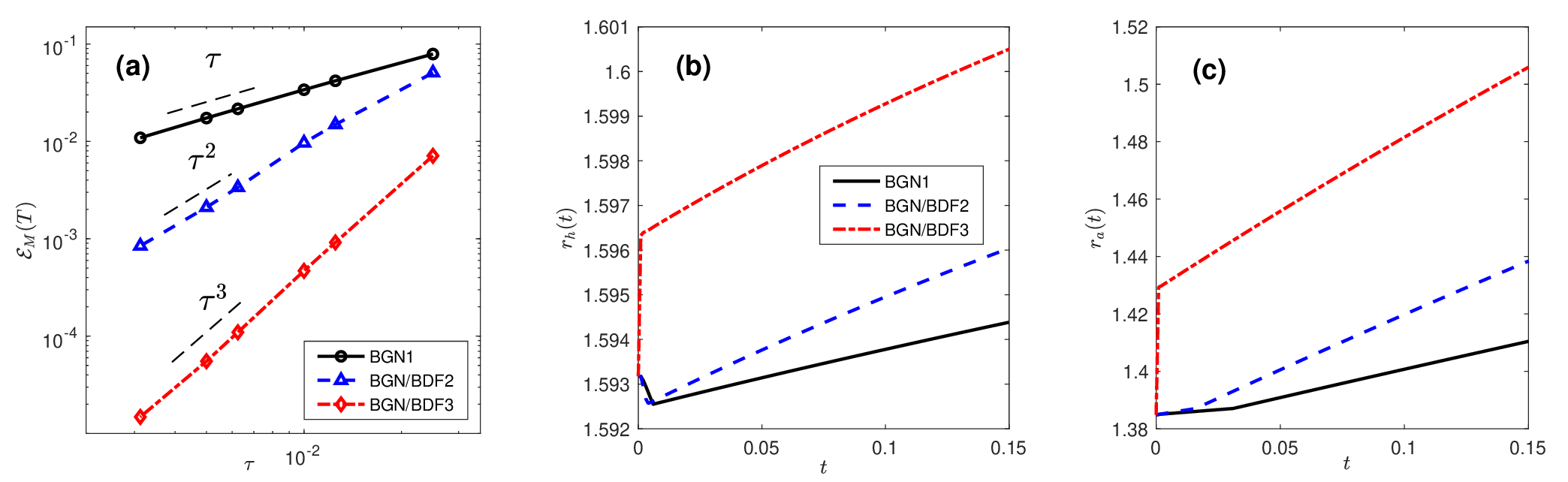}
\setlength{\abovecaptionskip}{-0pt}
\caption{(a) Log-log plot of the manifold distance errors for solving the MCF until $T=0.05$ with unit sphere initial shape, where the spatial parameters are chosen as $(J,K)=(93608,46806)$. The evolution of the mesh distribution functions: (b) $r_h(t)$ and (c) $r_a(t)$, where the parameters are chosen as $(J,K)=(14888,7446)$ and $\tau=1/1000$.}
\label{Fig:MCF}
\end{figure}

For the initial triangulation, we fix the fine mesh size $(J,K)=(93608,46806)$, where $J$ and $K$ represent the number of triangles and vertices of polyhedron, respectively.
Figure \ref{Fig:MCF}(a) shows a log-log plot of the manifold errors at time $T=0.05$ for BGN1 scheme and BGN/BDF$k$ schemes with $2\le k\le 3$, which demonstrates the expected convergence rate. From Figure \ref{Fig:MCF}(b)-(c), we observe that the two mesh distribution functions $r_h(t)$ and $r_a(t)$ remain below $2$ throughout the evolution, indicating the good mesh quality of our proposed BGN/BDF$k$ algorithms. We note that for BGN/BDF3 algorithm, the two mesh distribution functions increase rapidly at the first step. The reason is that we need a fine time step to initiate BGN/BDF3 scheme to ensure the third-order accuracy (see Algorithm \ref{BGN/BDF3 algorithm}), however, as reported in the recent paper~\cite{Duan2023}, the classical BGN1 scheme for surface evolution may result in mesh clustering for very fine time step sizes.


\begin{figure}[htpb]
\hspace{-10mm}
\includegraphics[width=5.5in,height=3.1in]{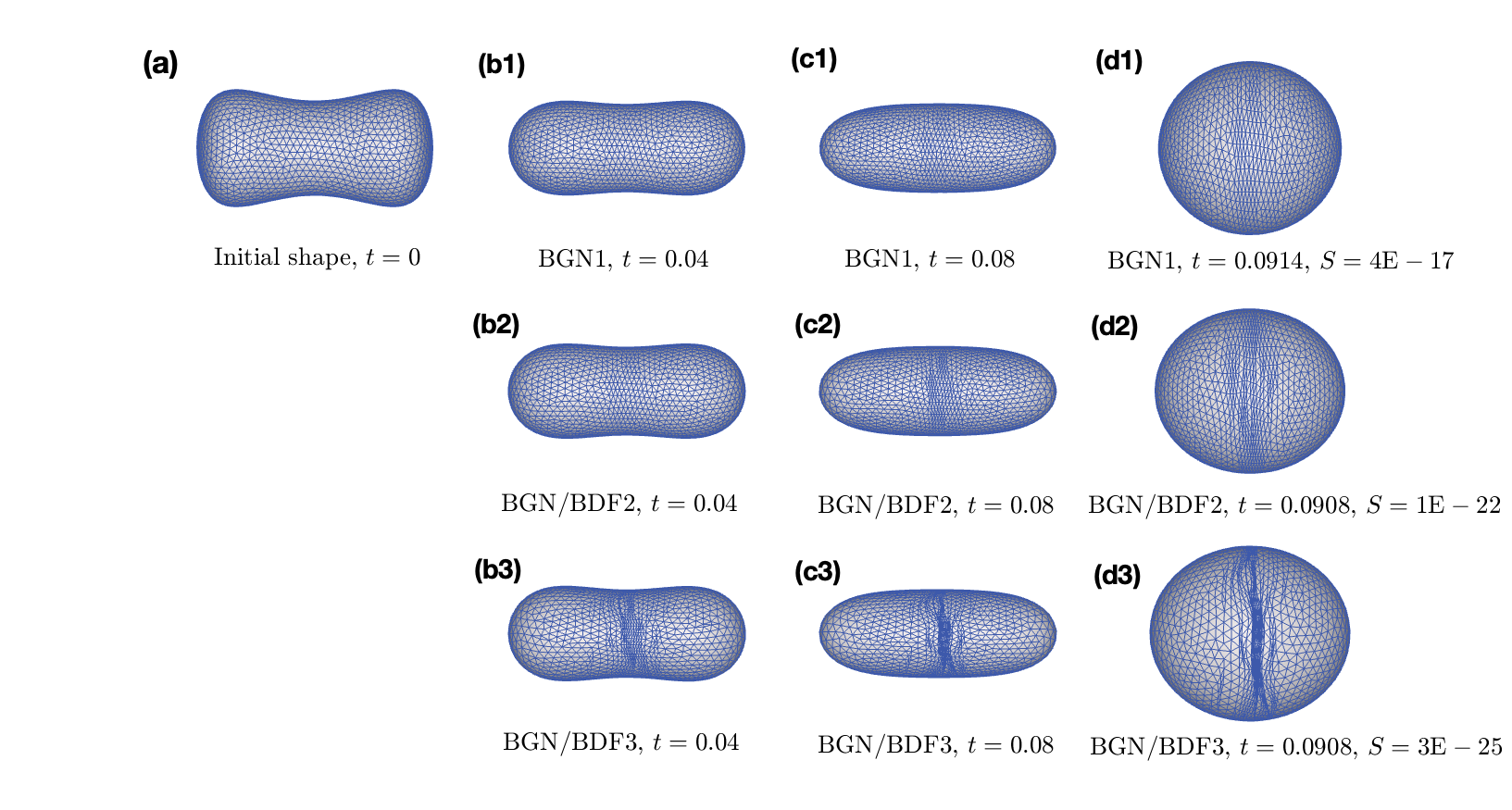}
\setlength{\abovecaptionskip}{-5pt}
\caption{Evolution  of MCF by BGN1 scheme (first row), BGN/BDF2 scheme (second row) and BGN/BDF3 scheme (third row) starting from the first dumbbell with a fat waist (note that the images are scaled). The initial surface is triangulated into $3604$ triangles with $1804$ vertices and the time step $\tau=1/10000$.}
\label{Fig:EVO_dumbbell1_re}
\end{figure}

\begin{figure}[htpb]
\hspace{-8mm}
\includegraphics[width=5.5in,height=2.4in]{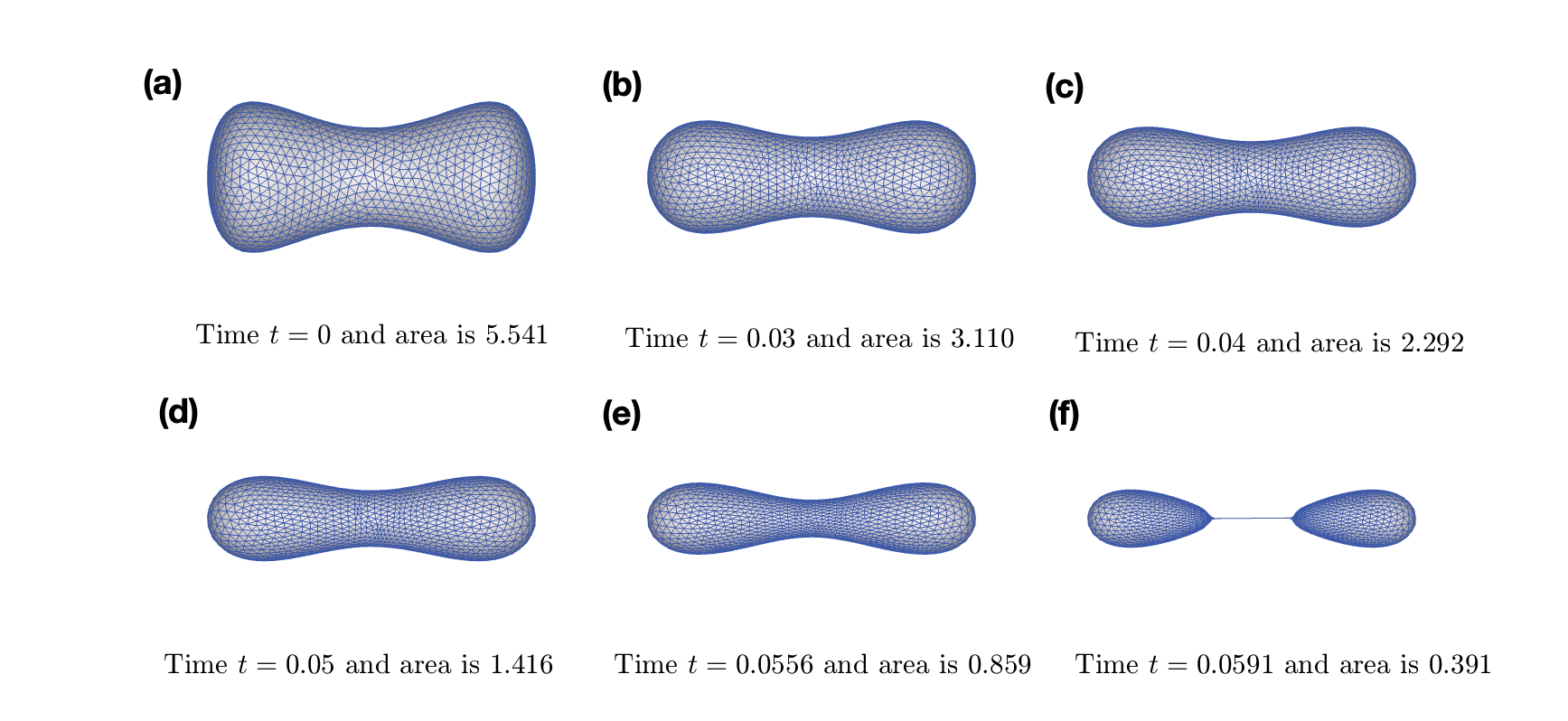}
\setlength{\abovecaptionskip}{-5pt}
\caption{Evolution  of MCF by BGN/BDF2 scheme starting from the first dumbbell with a thin waist (the images are scaled). The initial surface is triangulated into $3276$ triangles with $1640$ vertices and the time step $\tau=1/10000$. }
\label{Fig:EVO_dumbbell2}
\end{figure}

For the evolution test, we apply BGN/BDF$k$ schemes  to two benchmark dumbbell examples \cite{Elliott-Fritz,Duan2023}. The initial surface is a dumbbell-shape surface with a fat waist given by the following parametrization
\be\label{exdd1}
\bX(\theta,\varphi)=\begin{pmatrix}
	\cos\varphi\\
	(0.6\cos^2\varphi+0.4)\cos\theta\sin\varphi\\
	(0.6\cos^2\varphi+0.4)\sin\theta\sin\varphi
\end{pmatrix},\quad \theta\in [0,2\pi),\quad \varphi\in[0,\pi],
\ee
or a dumbbell shape with a thin  waist parameterized by
\be\label{exdd2}
\bX(\theta,\varphi)=\begin{pmatrix}
	\cos\varphi\\
	(0.7\cos^2\varphi+0.3)\cos\theta\sin\varphi\\
	(0.7\cos^2\varphi+0.3)\sin\theta\sin\varphi
\end{pmatrix},\quad \theta\in [0,2\pi),\quad \varphi\in[0,\pi].
\ee
The numerical simulations are presented in Figure \ref{Fig:EVO_dumbbell1_re} and Figure \ref{Fig:EVO_dumbbell2}, respectively. We can clearly observe that MCF evolves  the first  dumbbell shape to a round point (c.f. Figure \ref{Fig:EVO_dumbbell1_re}(d1), (d2) and (d3)), and  develops a neck pinch singularity for the second  dumbbell shape in finite time  (c.f. Figure \ref{Fig:EVO_dumbbell2}(f)). We observe that the BGN/BDF$k$ algorithms  achieve similar mesh quality as the classical BGN1 scheme. The mesh is slightly distorted in Figure \ref{Fig:EVO_dumbbell1_re}(d3) since we use a fine time step size to predict the polyhedron at the first step (cf. Algorithm \ref{BGN/BDF3 algorithm}), which may result in mesh distortion for surface evolution as reported in \cite{Duan2023}. Nevertheless, we have successfully obtained the blow-up times, that is $t=0.0908$ for the first dumbbell shape and $t=0.0591$ for the second dumbbell shape, respectively. These results are comparable to those reported  in \cite[Section 7.2 and 7.3]{Elliott-Fritz} and \cite[Example 3.2]{Duan2023}.

\begin{figure}[htpb]
\centering
\includegraphics[width=5.1in,height=2.8in]{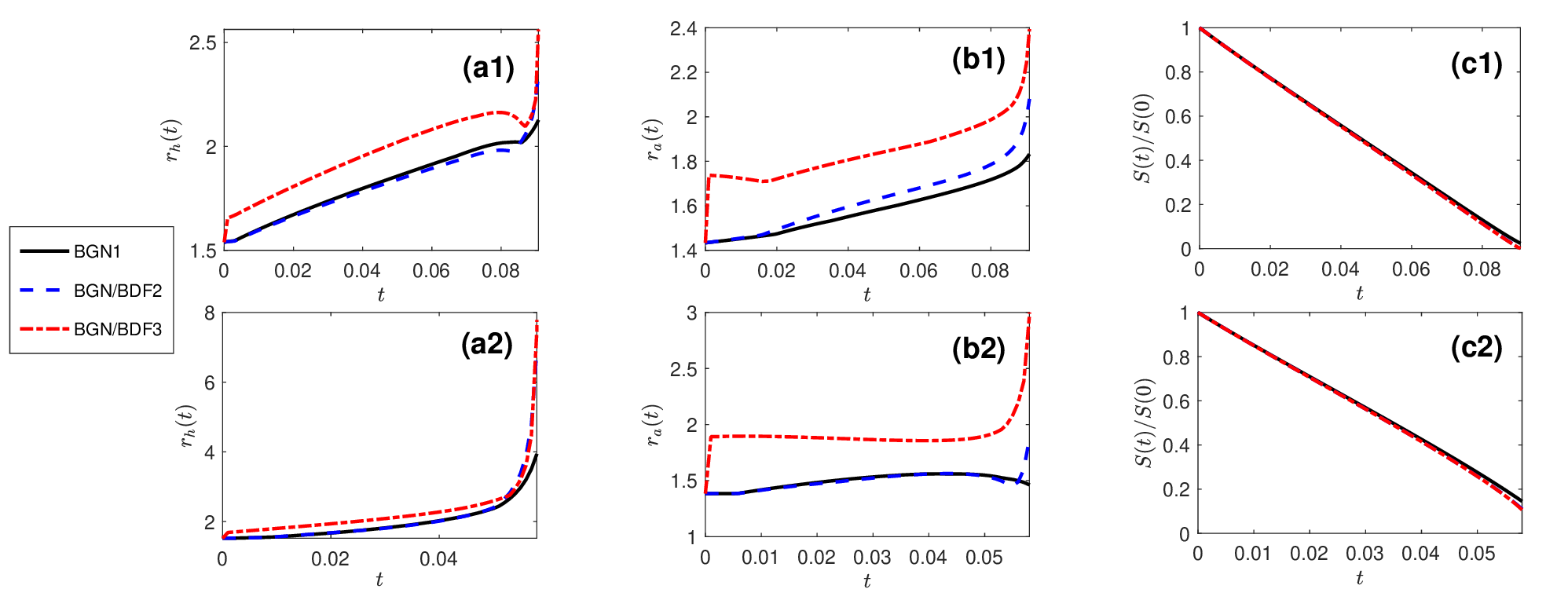}
\setlength{\abovecaptionskip}{0pt}
\caption{Evolution of geometric quantities in MCF for two initial dumbbell shapes by using BGN/BDF$k$ algorithms: the first dumbbell example \eqref{exdd1} (top row); the second dumbbell example \eqref{exdd2} (bottom row). The mesh distribution functions $r_h(t)$: (a1)-(a2); and $r_a(t)$: (b1)-(b2). The normalized surface area: (c1)-(c2). The time step is chosen  as  $\tau=1/1000$.}
\label{EVO_Geo_dumbbell}
\end{figure}

The evolution of the two mesh distribution functions $r_h(t)$, $r_a(t)$,  and the normalized surface area are plotted in Figure \ref{EVO_Geo_dumbbell}. As clearly shown in Figure  \ref{EVO_Geo_dumbbell}(a)-(b), our high-order schemes exhibit  similar mesh behavior as the classical BGN1 scheme before the blow-up time. Moreover, Figure \ref{EVO_Geo_dumbbell}(c1)-(c2) demonstrate that our methods preserve the geometric property (i.e., the decreasing surface area) of MCF very well.

\smallskip

\begin{example}[Extension to SDF]
 We investigate the performance of BGN/BDF$k$ schemes when applied to SDF in this example.
\end{example}

\smallskip

Specifically, the schemes for SDF can be similarly derived as the MCF case and it suffices to adjust the first equation as
\begin{align*}
	\l(\frac{\mathbf{X}^{m+1}-\mathbf{X}^{m}}{\tau},\varphi^h \mathbf{n}^{m} \r)^h_{\Gamma^{m}} +\l(\nabla_{\Gamma^m}  \mathcal{H}^{m+1} ,\nabla_{\Gamma^m}\varphi^h \r)_{\Gamma^m}&=0,\\
	\l(\frac{a \mathbf{X}^{m+1}-\mathbf{\widehat{X}}^{m}}{\tau},\varphi^h \mathbf{\widetilde{n}}^{m+1} \r)^h_{\widetilde{\Gamma}^{m+1}}+\l( \nabla_{\Gamma^m} \mathcal{H}^{m+1},\nabla_{\Gamma^m}\varphi^h \r)_{\widetilde{\Gamma}^{m+1}}&=0.
\end{align*}

Instead of conducting a convergence order test, we demonstrate the superiority of our high-order schemes by examining the relative volume loss $\Delta V(t)$. The initial surface is a $2:1:1$ ellipsoid defined by the equation  $x^2/4+y^2+z^2=1$, and we set the triangulation parameters as $(J,K) = (24952, 12478)$.  Figure \ref{EVO_Geo_ell_SDF} illustrates that all methods  maintain the geometric properties of SDF, i.e., the decrease of the surface area and the conservation of the volume enclosed by the surface. Additionally, Figure \ref{EVO_Geo_ell_SDF}(a2), (b2) and (c2) highlight that our high-order algorithms result in significantly smaller volume loss
compared to BGN1 scheme \cite{BGN08B}, thus showcasing the accuracy of our proposed methods.

\begin{figure}[htpb]
\centering
\includegraphics[width=5.1in,height=2.8in]{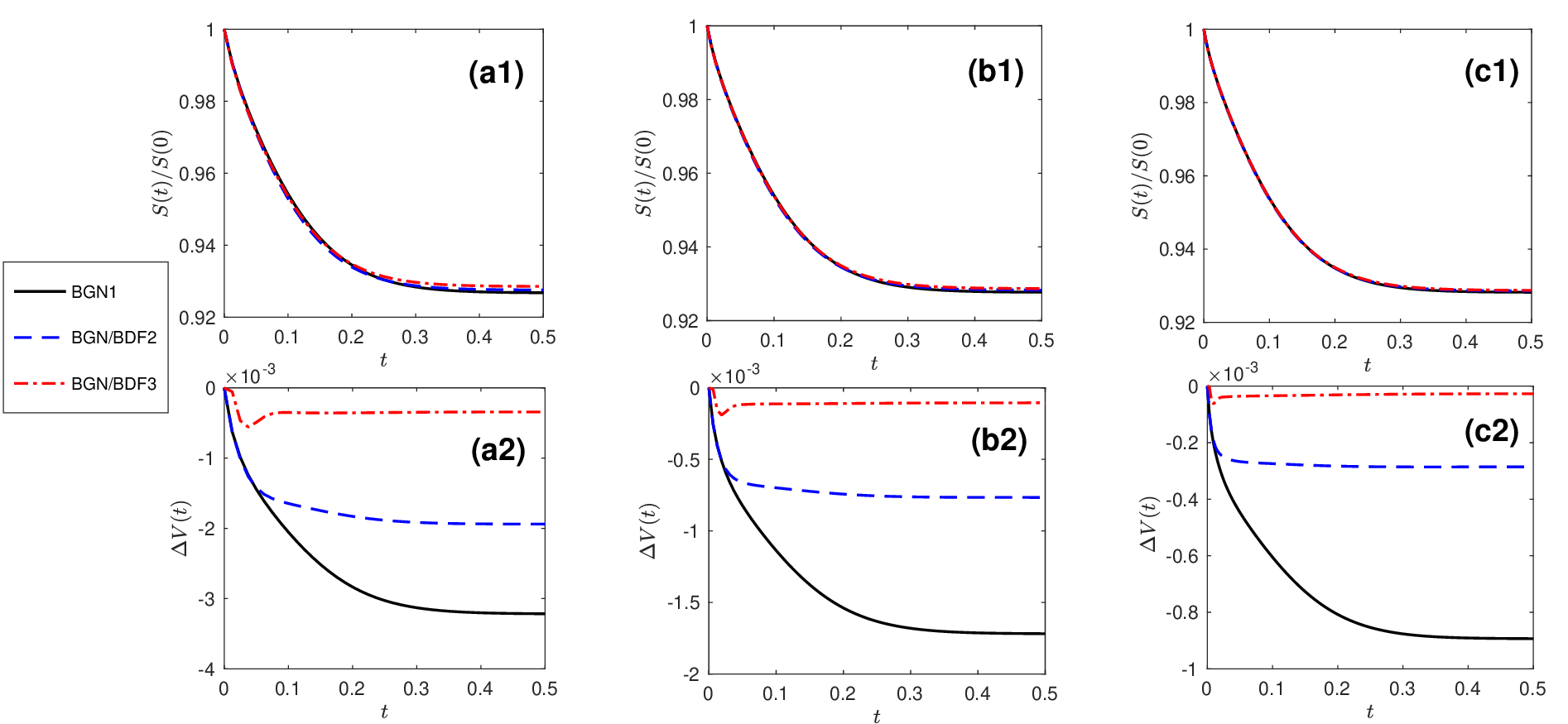}
\setlength{\abovecaptionskip}{0pt}
\caption{Evolution of geometric quantities for SDF applied to an initial ellipsoid using BGN/BDF$k$ algorithms. Top row: the normalized surface area; Bottom row: the relative volume loss. The triangulation parameters are set as $(J,K) = (24952, 12478)$ and the time step is set as $\tau=1/160$ for (a1), (a2); $\tau=1/320$ for (b1), (b2); and $\tau=1/640$ for (c1), (c2). }
\label{EVO_Geo_ell_SDF}
\end{figure}

In Figure \ref{Fig:EVO_ellipsoid}, we present several evolution snapshots of the $2:1:1$ ellipsoid towards its equilibrium using BGN/BDF2 scheme with the surface triangulation parameters $(J,K)= (2780,1392)$. Furthermore, Figure \ref{Fig:EVO_Geo_ellipsoid} depicts the  comparison of the two mesh distribution functions $r_h(t)$ and $r_a(t)$ for both BGN1 and BGN/BDF$k$ schemes. It is worth noting that our high-order method successfully evolves the ellipsoid into a perfect sphere while maintaining good mesh quality, similar to the classical BGN scheme.

\begin{figure}[htpb]
\centering
\includegraphics[width=5.6in,height=2.8in]{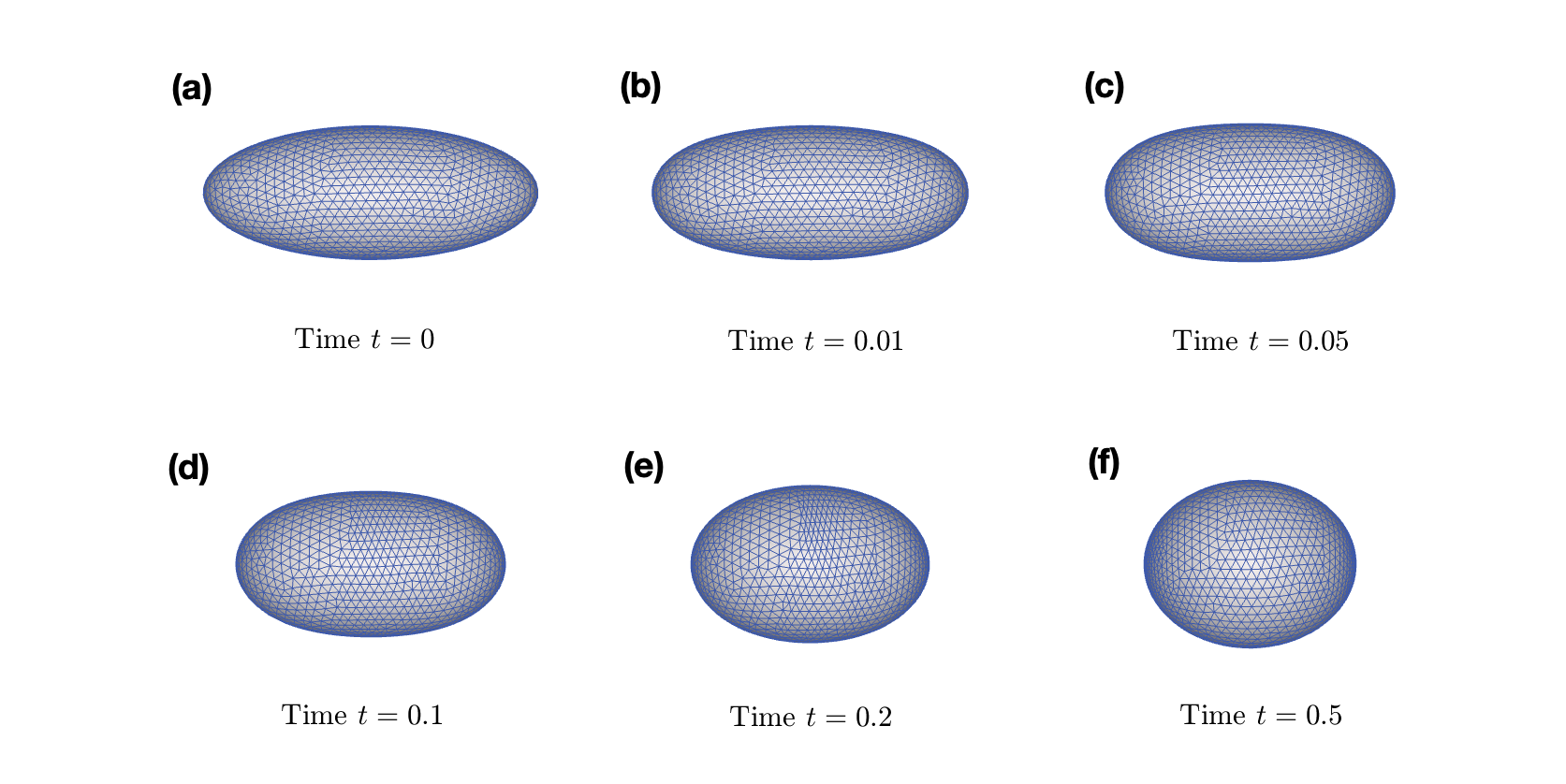}
\setlength{\abovecaptionskip}{-5pt}
\caption{Several evolution snapshots of SDF by BGN/BDF2 scheme, where the initial shape is chosen as the $2:1:1$ ellipsoid,  and
the triangulation parameters $(J,K)=(2780,1392)$ and  $\tau=10^{-4}$.}
\label{Fig:EVO_ellipsoid}
\end{figure}

\begin{figure}[htpb]
\centering
\includegraphics[width=5.1in,height=1.8in]{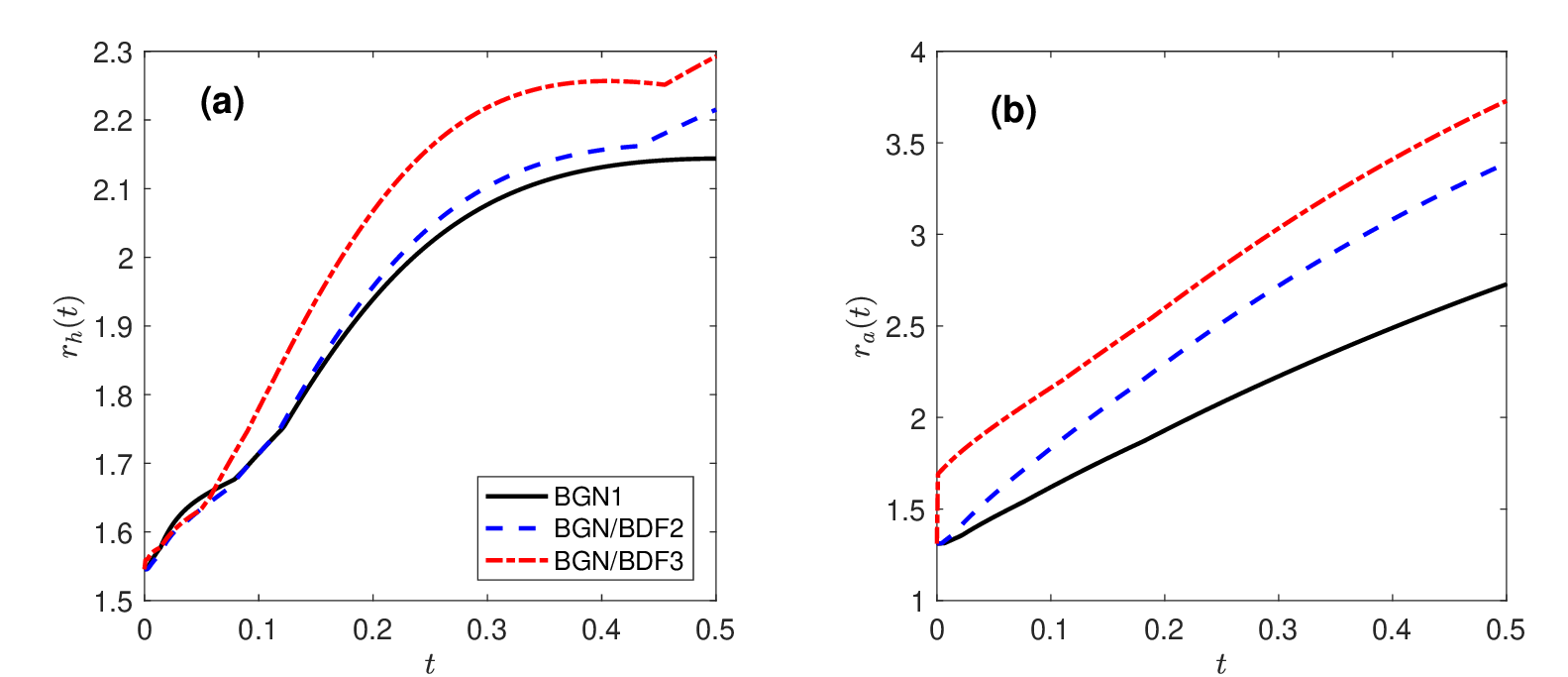}
\setlength{\abovecaptionskip}{5pt}
\caption{Evolution of mesh distribution functions of SDF for the $2:1:1$ ellipsoid: (a) $r_h(t)$, (b) $r_a(t)$, where $(J,K)=(2780,1392)$ and  $\tau=10^{-4}$.}
\label{Fig:EVO_Geo_ellipsoid}
\end{figure}

\begin{figure}[h!]
\centering
\includegraphics[width=5.1in,height=4.2in]{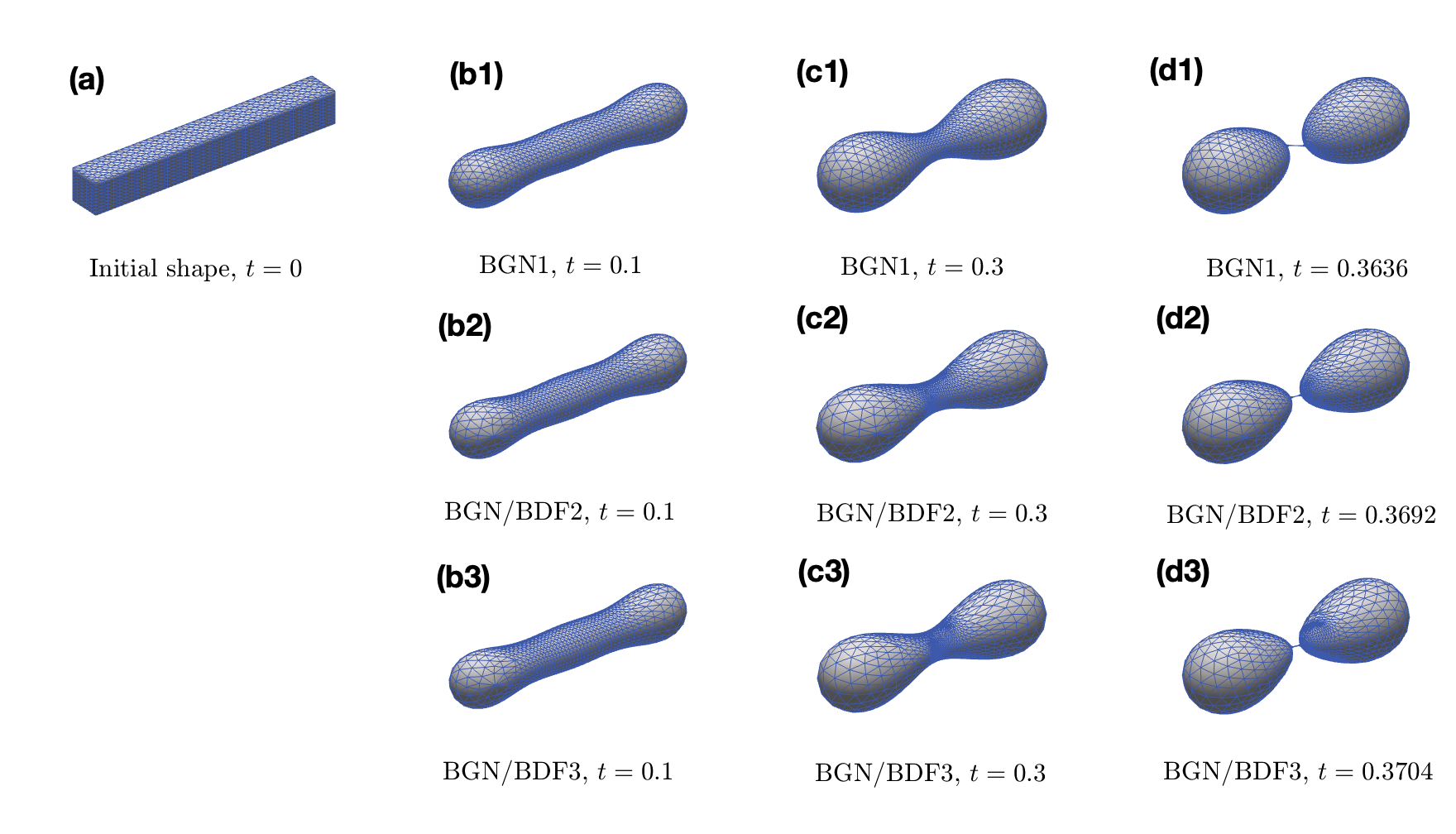}
\setlength{\abovecaptionskip}{-5pt}
\caption{Investigation of pinch-off time of an $8\times 1\times 1$ cuboid driven by SDF by using BGN1 scheme (first row), BGN/BDF2 algorithm (second row) and BGN/BDF3 algorithm (third row), where $(J,K)=(2600,1302)$ and $\tau=1/2500$.}
\label{Fig:EVO_cuboid}
\end{figure}

In the last experiment, we apply our BGN/BDF$k$ schemes to conduct the evolution of an $8\times 1\times 1$ cuboid driven by SDF. As a benchmark example, this cuboid  pinches off at finite time \cite{BGN08B,Bao-Zhao}. We select the triangulation parameters as $(J,K)=(2600,1302)$, and  the time step is fixed as  $\tau=1/2500$. The evolution is depicted in Figure \ref{Fig:EVO_cuboid}, where we observe that the pinch-off event occurs at time $t=0.3636$ for BGN1 scheme, $t=0.3692$ for  BGN/BDF2 algorithm and $t=0.3704$ for BGN/BDF3 algorithm, respectively. The pinch-off time of this benchmark cuboid example was reported as $t=0.369$ in \cite[Section 5.5]{BGN08B} using a tedious time adaptive method, and it was predicted as $t=0.370$ in \cite[Section 4.2]{Bao-Zhao}  by using  an implicit method. We emphasize that our  predicted pinch-off time agrees very well with the previous results~\cite{Bao-Zhao, BGN08B}, while requiring only to solve two or three linear systems at each time step. Furthermore, the monotonic increasing behavior of pinch-off times in Figure \ref{Fig:EVO_cuboid}(d1)-(d3) indicates that the high-order schemes provide a better prediction of the pinch-off time.

\smallskip

\section{Conclusion}
We have proposed a type of novel temporal high-order (second-order to fourth-order), parametric finite element methods based on the BGN formulation \cite{BGN07A,BGN07B,BGN20} for solving different types of geometric flows of curves and surfaces, including  CSF, AP-CSF, G-MCF, WF, MCF and SDF. Our approach is constructed based on the BGN formulation \cite{BGN07A,BGN07B,BGN20}, the backward differentiation formulae in time and linear finite element approximation in space. We carefully choose the prediction polygon $\widetilde{\Gamma}^{m+1}$ to ensure that the approximation errors of all quantities are at  $\mathcal{O}(\tau^k)$. The key to the success of BGN/BDF$k$ schemes is that $\widetilde{\Gamma}^{m+1}$ should be given by solving lower-order BGN/BDF$k$ schemes, instead of standard extrapolation, to maintain the mesh quality, which is very essential for the simulation of geometric flows.  Extensive numerical experiments demonstrate the expected, high-order accuracy and improved performance compared to the classical BGN scheme.

However, designing high-order schemes that preserve the geometric structure, specifically reducing the perimeter and conserving the enclosed area at the discrete level, remains challenging for certain geometric flows such as AP-CSF and SDF, which is our future work.

\section*{Acknowledgments}
This work is supported by the Center of High Performance Computing, Tsinghua University. The numerical calculations in this paper were also partially performed  on the supercomputing system in the Supercomputing Center of Wuhan University.


%
%

\end{document}